\newcommand{\N}{\mathbb{N}}
\newcommand{\Z}{\mathbb{Z}}
\newcommand{\R}{\mathbb{R}}
\newcommand{\A}{\ensuremath{\mathcal{A}}}
\newcommand{\D}{\ensuremath{\mathcal{D}}}
\newcommand{\Q}{\ensuremath{\mathcal{Q}}}
\newcommand{\IE}{\ensuremath{\Q \xrightarrow{\;\;E\;\;} \D}}
\DeclareMathOperator{\dual}{op}
\newtheorem{thm}{Theorem}[section]
\newtheorem{nota}[thm]{Notation}
\newtheorem{defn}[thm]{Definition}
\newtheorem{fact}[thm]{Fact}
\newtheorem{cor}[thm]{Corollary}
\begin{document}

\title[Dose-Escalation Designs Extending to Admit Titration]{Dose-Escalation Trial Protocols that\\ Extend Naturally to Admit Titration}
\author{David C. Norris}
\address{Precision Methodologies, LLC,  Wayland, Massachusetts}
\email{david@precisionmethods.guru}

\date{July 1, 2025}

\subjclass[2020]{92C50, 18B35, 18A40}

\thanks{\emph{Acknowledgments.} Mark Thom's Scryer Prolog, and the community
  that has grown around it, have made available a free, extensible,
  Standard-conforming Prolog system crucial for the computational aspects of
  this work.  Markus Triska has illuminated Prolog for me from a modern
  standpoint, and continually provokes improvements in my code and my thinking.
  An anonymous reviewer generously dissected an earlier version of this work
  presented at the Eighth International Conference on Applied Category Theory.
  The remaining errors are mine.  I dedicate this work to my father, and to the
  memory of my mother.}

\begin{abstract}%
Dose-escalation trials in oncology drug development still today typically aim to
identify 1-size-fits-all dose recommendations, as arbitrary quantiles of the
toxicity thresholds evident in patient samples.  In the late 1990's efforts to
individualize dosing emerged fleetingly in the oncology trial methods
literature, but these have gained little traction due to a nexus of conceptual,
technical, commercial, and regulatory barriers.  To reduce the `activation
energy' needed for transforming current 1-size-fits-all dose-escalation trial
designs to the dose-{\em titration} designs required for patient-centered dose
individualization, we demonstrate a categorical formulation of dose-escalation
protocols that extends readily to allow gradual introduction of dose titration.

Central to this formulation is a symmetric monoidal preorder on the accessible
states of dose-escalation trials, embodying pharmacologic intuitions regarding
dose-monotonicity of drug toxicity and ethical intuitions relating to the
therapeutic intent of such trials.  A trial protocol that assigns doses to
sequentially enrolled participants consistently with these intuitions is then a
monotone map from this preorder to the ordered finite set of doses being
trialed.  We illustrate this formulation by reference to the ubiquitous `3+3'
dose-escalation design, which despite its many widely discussed flaws remains
familiar to oncology trialists and moreover has available an executable
specification in Prolog.  Remarkably, examined in light of our preorder the 3+3
protocol discloses a {\em new flaw} not previously described: a non-monotone
dose recommendation.  The right Kan extension approximates this protocol from
the side of safety, dissolving its 3-at-a-time cohorts to allow {\em
  incremental} enrollment, and perforce rectifying said non-monotonicity.  It
also facilitates accelerated enrollment while toxicity assessments remain pending,
and indeed discretionary dose titration as well.

A basic simulation experiment is presented, demonstrating the feasibility of
trial protocols incorporating these elements, built on the right Kan extension
as well as a strictly safer and more parsimoniously parametrized lower-Galois
enrollment derived from it.  Further efforts along these lines might aim to
approximate any of several more modern dose-escalation designs that have begun
to supplant the 3+3, or seek de novo designs with specified safety properties
within the finite (if large) spaces of lower-Galois enrollment functors.
\end{abstract}

\maketitle

\setcounter{tocdepth}{1}
\vspace{-10mm}
\tableofcontents

\section{Introduction}

Despite the long-recognized heterogeneity in patients' pharmacokinetics and pharmacodynamics \cite{edler-statistical-1990}, dose-finding trials in oncology still today generally aim to identify a 1-size-fits-all dose recommendation, in the form of an arbitrary quantile of the population distribution of toxicity thresholds \cite{norris-comment-2020}.  While glimmers of hope have appeared fleetingly in the trial methods literature \cite{daugherty-learning-1997,simon-accelerated-1997}, a complex interaction of conceptual, technical and political factors \cite{norris-dose-2017,norris-ethical-2019,norris-how-2023} has impeded progress toward dose individualization in oncology trials.

Like an enzyme that reduces the activation energy needed for a chemical reaction, the present work aims to catalyze the transformation of existing dose-escalation designs to dose-{\em titration} designs.  A categorical formulation yields a standpoint from which this transformation may be conceived and implemented naturally, instead of being regarded as a special or problematical case.

\section{Operational Details of Dose-Escalation Trials}\label{ops}

In the methods literature, dose-finding designs are often defined and analyzed in highly stylized settings that abstract away certain essential operational details inherent to actual trials.  Adapting such designs --- often defined in terms of non-unit-sized `cohorts' enrolled in discrete time --- to continuous-time trial operations may be difficult.  Grafting on such considerations {\em after the fact} may involve massive (yet still incomplete) protocol tabulations \cite{skolnik-shortening-2008,frankel-model-2020}.  Here, however, we model these explicitly from the outset:

In queueing-theory terms, a dose-finding protocol amounts to a {\em service policy} for an {\em arrivals process} in which patients who have cancer, after exhausting standard treatment options, express willingness to try an experimental treatment.  At any time $\tau$, the protocol specifies a \textbf{current enrolling dose [level]} $d(\tau) \in \{0,1,...,D\}$ indexing an increasing sequence of dose intensities $\{0=x_0 < x_1 < ... < x_D\}$ that were fixed {\em ex ante}.  Patients arriving at a time $\tau$ when $d(\tau) = 0$ are placed into a waiting queue.  While $d(\tau) > 0$ any waiting patients are enrolled in order of arrival,\footnote{Since enrollment may generally update the current enrolling dose, preserving $d(\tau)$ as a well-defined function of time requires that enrolling each individual from a non-empty waiting queue takes some nonzero time interval, albeit one that may be treated as effectively infinitesimal.} and if the waiting queue is empty new arrivals are enrolled promptly.

\begin{nota}\label{assessment}
  The \textbf{participants} in a dose-escalation trial, indexed by $i \in I$, \textbf{enroll} at time $\tau_0^i$ into dose level $d^i$.  Given that toxic responses generally manifest with some latency after dose administration, \textbf{toxicity assessment} remains \textbf{pending} for participant $i$ until some time $\tau_1^i \in (\tau_0^i, \tau_0^i + \delta \tau]$ when the assessment \textbf{resolves} into one of three \textbf{outcomes}:
  
    \begin{itemize}
    \item Participant $i$ is found to have experienced an (intolerable) \textbf{toxicity},
    \item to have become \textbf{inevaluable} due to early withdrawal from the trial or death unrelated to toxicity,
    \item or otherwise (at $\tau_1^i = \tau_0^i + \delta\tau$) is assessed to have \textbf{tolerated} their dose.\footnote{We here ignore late-manifesting toxicities that occur after the lapse of time $\delta\tau$, which is typically on the order of 1 month.}
    \end{itemize}
\end{nota}

\begin{nota}\label{outcome-notation}
  We indicate \textbf{evaluability} by $n^i \in \{0,1\}$, and occurrence of \textbf{toxicity} by $y^i \in \{0,1\}$ or sometimes more distinctively by $y^i \in \{\textup{\texttt{o}},\textup{\texttt{x}}\}$.
\end{nota}

\begin{nota}
  We write $I_d(\tau) \subseteq I$ for the subset of individuals enrolled at dose $d$ whose assessments have resolved by time $\tau$:
  $$
  I_d(\tau) = \{ i \in I \mid d^i = d,\, \tau_1^i \le \tau \}.
  $$
\end{nota}

\begin{defn}
  For each dose $d$, we have at any time $\tau$ the formal quotient,
  $$
  q_d(\tau) = \frac{t_d}{n_d}(\tau) = \sum_{i \in I_d(\tau)} \frac{y^i}{n^i} \;\in\; Q = \{ t/n \mid t, n \in \N;\; t \le n \},
  $$
  recording the assessment of $t_d$ toxic responses among $n_d$ evaluable trial participants who have received dose $d$.  The vector of quotients $q(\tau) = (q_1,...,q_D)(\tau) \in Q^D$ will be called the \textbf{cumulative tally} at time $\tau$.  Together with the \textbf{pending count} $p : \R^+ \rightarrow \N^D$ having dosewise components
  $$
  p_d(\tau) = | \{ i \in I \mid d^i = d,\, \tau_0^i \le \tau < \tau_1^i \} |,
  $$
  this constitutes the \textbf{enrolled state}, $s(\tau) = (q(\tau), p(\tau))$.
\end{defn}

\begin{nota}
  By implicitly regarding $\tau$ as an arbitrary `current time' or `now', we will often freely suppress the $\tau$-dependence of $I_d$, $q$, $p$ and $s$.
\end{nota}

In order to determine the enrolling dose $d(\tau)$ as a function of $s(\tau)$, we proceed to elaborate a symmetric, monoidal partial order on $Q^D$ that captures certain fundamental pharmacologic and ethical intuitions.

\section{Modeling Pharmacologic Monotonicities}

\begin{defn}
  Let $+:Q \times Q \rightarrow Q$ be defined by
  $$
  \frac{t_1}{n_1} + \frac{t_2}{n_2} = \frac{t_1 + t_2}{n_1 + n_2}.
  $$
  Observe that this is a monoidal operation with unit $\frac{0}{0}$, which extends in the obvious way to a monoidal operation on $Q^D$ with unit $(\frac{0}{0},...,\frac{0}{0})$.
\end{defn}

\begin{defn}\label{lesaf}
  Let $\preceq$ be the transitive closure of a preorder relation satisfying,
  \begin{equation}
  \frac{t}{n}\!+\!\frac{1}{1} \;\preceq\; \frac{t}{n} \;\preceq\; \frac{t}{n}\!+\!\frac{0}{1} \quad \forall\; \frac{t}{n} \in Q. \label{lesafcond}
  \end{equation}
  Then the preorder $(Q,\preceq)$ compares the \textbf{evident safety} expressed in dosewise tallies, such that we read
  $$
  q_1 \preceq q_2
  $$
  as ``$q_1$ is evidently no safer than $q_2$'' or ``$q_2$ is evidently at least as safe as $q_1$''.
\end{defn}

\begin{fact}
  $(Q,\preceq,\frac{0}{0},+)$ is a symmetric monoidal preorder.  It is easy to see that $+$ is a symmetric monoidal operation on $Q$ with unit $0/0$, the necessary unitality, associativity and commutativity all being inherited directly from the monoid $(\N,0,+)$.  The monotonicity condition.
  $$
  q \preceq q',\; g \preceq g' \implies q + g \preceq q' + g',
  $$
  arises by induction from the Definition~\ref{lesaf} of $\preceq$ in terms of $+$.
\end{fact}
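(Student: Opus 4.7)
The plan is to establish three things: that $(Q,+,\tfrac{0}{0})$ is a commutative monoid; that $\preceq$ is genuinely a preorder; and that the tensor compatibility $q \preceq q' \wedge g \preceq g' \Rightarrow q+g \preceq q'+g'$ holds. The extension from $Q$ to $Q^D$ follows by a routine componentwise argument (products of symmetric monoidal preorders are symmetric monoidal), so I would focus all real work on $Q$ itself.

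For the monoid axioms I would simply read unitality, associativity and commutativity off $(\N,+,0)$ applied separately to numerators and denominators of $\tfrac{t}{n}$; for instance $\tfrac{t}{n}+\tfrac{0}{0}=\tfrac{t+0}{n+0}=\tfrac{t}{n}$. Transitivity of $\preceq$ is by construction as a transitive closure, and reflexivity $\tfrac{t}{n}\preceq\tfrac{t}{n}$ is the empty chain under the (reflexive-)transitive closure reading.

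The real content is the monotonicity of $+$ against $\preceq$, and this is where I expect to spend any actual effort. My plan is to split into left-monotonicity $q\preceq q' \Rightarrow q+g \preceq q'+g$ and right-monotonicity $g\preceq g' \Rightarrow q+g \preceq q+g'$; the latter follows from the former by commutativity of $+$, and chaining through the intermediate point $q'+g$ via transitivity then yields the combined statement. To prove left-monotonicity I would induct on the length of a generator-chain witnessing $q\preceq q'$. The base case reduces to checking that each single-step generator survives the addition of a fixed $g$ on the right: if $q'=q+\tfrac{0}{1}$, invoking condition~\eqref{lesafcond} at the element $q+g\in Q$ gives $q+g\preceq q+g+\tfrac{0}{1}=q'+g$, and if $q=q'+\tfrac{1}{1}$, invoking it at $q'+g$ gives $q'+g+\tfrac{1}{1}=q+g\preceq q'+g$. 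The inductive step is a one-line application of transitivity of $\preceq$.

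The main subtlety to guard against is that the generators be quantified over \emph{all} of $Q$ --- in particular over elements of the form $r+g$, not merely ``already-normalized'' tallies --- so that the single-step argument can be freely instantiated. Since condition~\eqref{lesafcond} is stated for every $\tfrac{t}{n}\in Q$, this is fine, and I do not anticipate any deeper obstacle.
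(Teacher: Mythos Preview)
Your proposal is correct and matches the paper's approach exactly: the paper's ``proof'' is just the two sentences embedded in the Fact itself, asserting that the monoid axioms are inherited from $(\N,0,+)$ and that monotonicity ``arises by induction from the Definition~\ref{lesaf} of $\preceq$ in terms of $+$.'' You have simply unpacked that induction carefully (split into left/right monotonicity, base case by instantiating \eqref{lesafcond} at $q+g$, inductive step by transitivity), which is precisely what the paper leaves implicit.
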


\begin{fact}\label{charQ1}
  $$
  \frac{t}{n} \preceq \frac{t'}{n'} \quad \iff \quad t \ge t' + \max(0,n-n').
  $$
\end{fact}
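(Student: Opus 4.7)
The plan is to prove both implications of the biconditional separately. For ``$\Rightarrow$'', I would define the auxiliary relation $R$ on $Q$ by the right-hand side of the claim and verify that $R$ is itself a preorder containing both generators from (\ref{lesafcond}); by minimality of $\preceq$ as the preorder generated by those pairs, this yields $\preceq \;\subseteq\; R$. For ``$\Leftarrow$'', I would exhibit an explicit chain of generating steps from $\frac{t}{n}$ up to $\frac{t'}{n'}$ in $Q$.

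The three facts required of $R$ are all elementary. Reflexivity is the statement $t \ge t + \max(0,0)$. Transitivity reduces, after summing the two hypothesized inequalities, to the subadditivity $\max(0,\alpha+\beta) \le \max(0,\alpha) + \max(0,\beta)$ applied with $\alpha = n-n'$ and $\beta = n'-n''$. That $R$ contains each generator is direct substitution: $\frac{t+1}{n+1} \mathrel{R} \frac{t}{n}$ reads $t+1 \ge t + \max(0,1)$, and $\frac{t}{n} \mathrel{R} \frac{t}{n+1}$ reads $t \ge t + \max(0,-1)$.

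For the reverse direction, set $a := t - t'$ and $b := (n' - n) + (t - t')$; the hypothesis $t \ge t' + \max(0, n-n')$ is precisely the conjunction $a \ge 0$ and $b \ge 0$. Applying the ``$+\frac{1}{1}$'' generator $a$ times (read as $\frac{s}{m} \preceq \frac{s-1}{m-1}$) chains $\frac{t}{n} \preceq \frac{t-1}{n-1} \preceq \cdots \preceq \frac{t'}{n-a}$, and $b$ subsequent applications of the ``$+\frac{0}{1}$'' generator extend this to $\frac{t'}{n-a} \preceq \cdots \preceq \frac{t'}{n-a+b} = \frac{t'}{n'}$; transitivity of $\preceq$ then gives $\frac{t}{n} \preceq \frac{t'}{n'}$. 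The one place deserving care is checking that every intermediate element lies in $Q$: the first generator preserves the invariant $t_i \le n_i$ and introduces no negative entries because $a \le t$ (since $t' \ge 0$), while the second generator only increases the denominator. I expect this bookkeeping to be the only routine obstacle.
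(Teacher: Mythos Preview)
Your proof is correct. The paper takes a different route: it rewrites each $\frac{t}{n}$ in ``ratio form'' $t\!:\!u$ with $u=n-t$, so that the two generators become the maps $(t,u)\mapsto(t+1,u)$ and $(t,u)\mapsto(t,u+1)$. In these coordinates $\preceq$ is visibly the product order $(\N,\ge)\times(\N,\le)$, giving $t\!:\!u \preceq t'\!:\!u' \iff t\ge t'$ and $u\le u'$ at once; the stated characterization then drops out by back-substituting $u=n-t$.

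Your argument stays in the $(t,n)$ coordinates and handles the two implications by separate mechanisms: minimality of the generated preorder for $\Rightarrow$, and an explicit chain for $\Leftarrow$. This is more hands-on but entirely self-contained, and your bookkeeping for the intermediate elements (in particular $n-a=(n-t)+t'\ge t'\ge 0$) is exactly what is needed. The paper's coordinate change buys a one-line characterization of $\preceq$ and avoids the case split hidden in $\max(0,\cdot)$; your approach buys an explicit witnessing chain and makes the role of minimality transparent, which the paper leaves implicit in the word ``consequently.''
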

\begin{proof}
  This is most easily seen by expressing \eqref{lesafcond} in its equivalent {\em ratio} form,
  $$
  t\!:\!u + 1\!:\!0 \preceq t\!:\!u \preceq t\!:\!u + 0\!:\!1 \quad \forall\; t\!:\!u \equiv \frac{t}{t+u} \in Q,
  $$
  and observing that consequently $t\!:\!u \preceq t'\!:\!u'$ iff $t \ge t'$ and $u \le u'$.  This latter condition, in turn, may be transformed as follows:
  \begin{align*}
    & t \ge t' \;\wedge\; u \le u' \\
    \iff\quad & t \ge t' \;\wedge\; n-t \le n'-t' \\
    \iff\quad & t \ge t' \;\wedge\; t \ge t' + (n-n') \\
    \iff\quad & t \ge t' + \max(0,n-n').
  \end{align*}
\end{proof}

\begin{nota}
  Let $\langle q \rangle_j$ denote the tally $(\frac{0}{0},...,\frac{0}{0},q,\frac{0}{0},...,\frac{0}{0}) \in Q^D$ with $q \in Q$ in the $j$'th position and $0/0$ elsewhere, and let $\langle q, q' \rangle_{j,k}$ denote the tally $\langle q \rangle_j + \langle q' \rangle_k$ with $q, q' \in Q$ in the $j$'th and $k$'th positions of an otherwise-$0/0$ tally.  It is to be understood that $j < k$ whenever this latter notation is used.
\end{nota}

\begin{nota}
  The sheer fact of having recorded a tally of the form $\langle\frac{1}{1},\frac{0}{1}\rangle_{j,k}$ means that we enrolled participants $i, i' \in I$ at doses $x_j < x_k$ respectively, and upon assessment found that:
  $$
  y(i,x_j) = 1, \; y(i',x_k) = 0,
  $$
  in which we have expanded the \underline{observed} $y^i$ of Notation~\ref{outcome-notation} to a monotone function $y(i,-):\R^+\rightarrow\{0,1\}$ that embraces the \underline{counterfactual} (or `potential') outcomes for individual $i$ at any hypothetical dose.
\end{nota}
  
Thus we may regard the {\em possibility of observing} $\langle\frac{1}{1},\frac{0}{1}\rangle_{j,k}$ as \textbf{equivalent to a proposition:}
  $$
  \langle\frac{1}{1},\frac{0}{1}\rangle_{j,k} \; \equiv \; \exists\, i, i' \in I \;\;\mbox{such that}\;\;\, y(i,x_j) = 1 \;\mbox{and}\;\, y(i',x_k) = 0.
  $$
  On this understanding, we can express the pharmacologic premise of \textbf{monotone dose-toxicity} via,
  $$
  \langle\frac{1}{1}\rangle_j \implies \langle\frac{1}{1}\rangle_k \;\forall\, j < k
  \quad\quad\text{and}\quad\quad
  \langle\frac{0}{1}\rangle_j \impliedby \langle\frac{0}{1}\rangle_k \;\forall\, j < k.
  $$
In words: a participant who experiences a toxicity at dose $j$ would also experience a toxicity at any higher dose $k>j$; conversely, a participant who tolerates dose $k$ would also tolerate any lower dose $j<k$.
\begin{defn}\label{domon}
  Let $\preceq_0$ denote the monoidal preorder relation on $Q^D$ generated by the following arrows:
  $$
  \begin{aligned}
    \langle\frac{0}{0}\rangle      &\preceq_{tol_1} \langle\frac{0}{1}\rangle_1 \\
    \langle\frac{0}{1}\rangle_{j-1} &\preceq_{titro_j} \langle\frac{0}{1}\rangle_j ,\; j \in \{2,...,D\} \\
    \langle\frac{1}{1}\rangle_{j} &\preceq_{titrx_j} \langle\frac{1}{1}\rangle_{j+1} ,\; j \in \{1,...,D-1\} \\
    \langle\frac{1}{1}\rangle_D &\preceq_{det_D} \langle\frac{0}{0}\rangle
  \end{aligned}
  $$
  We call a monoidal preorder relation $\preceq$ on $Q^D$ \textbf{dose-monotone} iff $\preceq_0\,\subseteq\,\preceq$.
\end{defn}

\begin{nota}
  Without ambiguity, we let each of the designations $\preceq_*$ of Definition~\ref{domon} stand for any relation which it implies directly by monoidality.  Thus, we write simply $q \preceq_{tol_1} q'$ whenever $q+\langle\frac{0}{1}\rangle_1 = q'$, we write $q \preceq_{titro_j} q'$ to mean $\exists b \in Q^D$ such that $q = b + \langle\frac{0}{1}\rangle_{j-1} \preceq_{titro_j} \langle\frac{0}{1}\rangle_j + b = q'$, and so forth.
\end{nota}
  
The subscripts on $\preceq_*$ in Definition~\ref{domon} indicate the underlying intuitions of these `atomic' arrows, considered as incremental transformations which tallies may undergo as a trial progresses.  Thus, observing a new participant's toleration of dose 1 yields a new tally that is evidently safer:
$$
q \preceq_{tol_1} q + \langle\frac{0}{1}\rangle_1.
$$
Conversely, the observation of a new toxicity --- even at the highest dose, where it is least surprising --- yields a tally that is evidently less safe:
$$
q + \langle\frac{1}{1}\rangle_D \preceq_{det_D} q.
$$

The transformations corresponding to $\preceq_{titro_j}$ and $\preceq_{titrx_j}$ would be (respectively) those in which a trial participant tolerates dose $j$ after titrating upward from a tolerated dose $j-1$, and where a participant experiences toxicity at dose $j$ after a dose reduction from an intolerable dose $j+1$.\footnote{Interpreted thus, the arrows of $\preceq_{det_D}$ and $\preceq_{titrx_j}$ run opposite the `arrow of time'.}  That dose-{\em escalation} designs exclude such titration maneuvers {\em by definition}\footnote{I am appealing here to an escalation--titration distinction introduced in \cite{norris-dose-2017} with some support from the treatment of these issues in e.g. \cite{senn-statistical-2007-1}.} does not exempt them from the underlying pharmacological principle expressed in these arrows.  Thus, we are entitled to examine dose-escalation trials in light of this idea, even if their designs overtly ignore it.

To see why any sensible preorder $\preceq$ on $Q^D$ must be {\em monoidal}, imagine that a dose-escalation study is being conducted at two different medical centers.  The investigators at center A notice that, if they break out their own current tally by sex, $q^A = q^A_f + q^A_m$, they find $q^A_m \preceq q^A_f$ --- the drug looks less toxic in females.  Meanwhile, center B investigators have noticed the same phenomenon locally: $q^B_m \preceq q^B_f$.  Monoidality ensures this finding does not paradoxically vanish upon pooling the data: $q_m = q^A_m+q^B_m \preceq q^A_f+q^B_f = q_f$.

\subsection{An explicit characterization of $\preceq_0$}

For the $D=3$ case, we can depict the atomic transformations $q = (\frac{t_1}{n_1},\frac{t_2}{n_2},\frac{t_3}{n_3}) \preceq_a (\frac{t_1'}{n_1'},\frac{t_2'}{n_2'},\frac{t_3'}{n_3'}) = q'$ of Definition~\ref{domon} as follows:
  \begin{center}
  \begin{tikzpicture}
    \matrix (mat) [%
      matrix of math nodes,
      column 1/.style={anchor=east},
      column 8/.style={anchor=west}]
            {%
    & u_1 & u_2 & u_3 & t_1 & t_2 & t_3 \\
    (\preceq_{tol_1})    & +1 &    &    &    &    &    \\ 
    (\preceq_{titro_2})  & -1 & +1 &    &    &    &    \\ 
    (\preceq_{titro_3})  &    & -1 & +1 &    &    &    \\ 
    (\preceq_{titrx_1})  &    &    &    & -1 & +1 &    \\ 
    (\preceq_{titrx_2})  &    &    &    &    & -1 & +1 \\ 
    (\preceq_{det_3})    &    &    &    &    &    & -1 \\ 
    & u_1' & u_2' & u_3' & t_1' & t_2' & t_3' \\
            };
  \end{tikzpicture}    
  \end{center}
  Note the cascading effect here, in which $\mathrm{tol}_1$ arrows inject \texttt{o}'s at the lowest dose and the $\mathrm{titro}_d$ titrate these upward, all without affecting the toxicity counts $t_d$; whereas the $\mathrm{titrx}_d$ and $\mathrm{det}_D$ conspire to shift \texttt{x}'s upward and exit stage right.  Observing these `flows' may help to motivate the following Definitions:


  \begin{defn}\label{doseintensity}
    The \textbf{dose intensity}\footnote{This term already enjoys widespread use in oncology, with which this definition is concordant.} of a dosewise vector of counts $(c_d)_{d=1}^D\in\N^D$ is the vector,
    $$
    C_d = (\sum_{j=d}^D c_j),\;1\le d\le D.
    $$
    For a tally $(t/n)=(t\!:\!u)\in Q^D$, we will speak of the \textbf{tolerated dose intensity} and \textbf{[net] dose intensity},
    $$
    U_d = (\sum_{j=d}^D u_j)
    \quad\mathrm{and}\quad
    N_d = (\sum_{j=d}^D n_j).
    $$
    Being a sequence of `upper tails', a dose intensity is decreasing---much like a survival curve.
  \end{defn}
  
  \begin{defn}\label{toxprofile}
    The \textbf{toxicity profile} of a tally $\frac{t}{n}\in Q^D$ is the distribution,
    $$
    T_d = (\sum_{j=1}^d t_j).
    $$
    Being a sequence of `lower tails', the $(T_d)$ are increasing---like a cumulative distribution function.
  \end{defn}

  \begin{fact}\label{enst0}
    $q \preceq_0 q' \iff U\le U'$ and $T\ge T'$.
  \end{fact}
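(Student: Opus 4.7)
The proof naturally splits into the two implications, and the cleanest first step is to observe that the four atomic arrow families of Definition~\ref{domon} act independently on the $u$-component and the $t$-component of a tally: $\preceq_{tol_1}$ and $\preceq_{titro_j}$ only modify tolerability counts, while $\preceq_{titrx_j}$ and $\preceq_{det_D}$ only modify toxicity counts. This orthogonality will let me treat $U$ and $T$ separately.

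For the forward direction $(\Rightarrow)$, I proceed by induction on the length of a chain of atomic arrows witnessing $q \preceq_0 q'$. The induction step reduces to checking, one arrow at a time, that $U$ is non-decreasing and $T$ is non-increasing along each atomic arrow: $\preceq_{tol_1}$ increments $u_1$ by $1$, bumping $U_1$ by $1$ and leaving $U_d$ for $d\ge 2$ fixed; $\preceq_{titro_j}$ moves a unit from $u_{j-1}$ to $u_j$, which leaves every $U_d$ fixed except $U_j$, which gains $1$; $\preceq_{titrx_j}$ moves a unit from $t_j$ to $t_{j+1}$, decreasing $T_j$ by $1$ and leaving every other $T_d$ fixed; and $\preceq_{det_D}$ decrements $t_D$, thereby decrementing $T_D$. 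The $u$-arrows leave $T$ untouched and vice versa. None of these computations is substantive.

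The reverse direction $(\Leftarrow)$ is the content of the fact and calls for an explicit construction. Thanks to the orthogonality noted above, it suffices to prove two independent reachability statements: (i) if $U \le U'$ then the $u$-part of $q$ can be transformed into the $u$-part of $q'$ using only $\preceq_{tol_1}$ and $\preceq_{titro_j}$ steps, and (ii) if $T \ge T'$ then the $t$-part of $q$ can be transformed into the $t$-part of $q'$ using only $\preceq_{titrx_j}$ and $\preceq_{det_D}$ steps. Each is a one-dimensional flow problem: think of tolerability units as particles that are injected at position $1$ and only move rightward, and of toxicity units as particles that can only move rightward and eventually exit at $D$. For (i), writing $e$ for the number of $tol_1$ injections and $f_j$ for the number of $titro_{j+1}$ transports from position $j$ to $j+1$, conservation at each node forces $f_j = U_{j+1}' - U_{j+1}$ for $1 \le j \le D-1$ and $e = U_1' - U_1$; the hypothesis $U \le U'$ is exactly what is needed for every $e, f_j$ to be non-negative. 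Calculation (ii) is symmetric.

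The only residual subtlety, and what I expect to be the trickiest bookkeeping, is that each $titro_j$ requires $u_{j-1}>0$ at the instant it fires, and similarly for $titrx_j$; the non-negativity of the aggregate flows does not by itself guarantee a legal ordering. This is handled by firing the arrows from left to right --- all $tol_1$ injections first, then all $titro_2$, then all $titro_3$, etc.\ (and symmetrically, all $titrx_{D-1}$ before $titrx_{D-2}$, \ldots, with $det_D$ steps performed last) --- which at every moment leaves enough mass on the source node to feed the next transport. Concatenating the $u$-path and the $t$-path then exhibits a chain of atomic arrows from $q$ to $q'$, completing the proof.
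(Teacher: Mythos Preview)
Your approach is the same as the paper's, and your forward direction and flow-count computation for the reverse direction are correct.  There is, however, a bug in your prescribed firing order for the $t$-part.  You say to fire ``all $titrx_{D-1}$ before $titrx_{D-2}$, \ldots, with $det_D$ steps performed last,'' i.e.\ in \emph{decreasing} index.  This fails: take $D=3$, $t=(1,0,0)$, $t'=(0,0,0)$, so $T=(1,1,1)\ge(0,0,0)=T'$ and your flow counts are $g_1=g_2=h=1$.  Your order would fire $titrx_2$ first, which needs $t_2>0$, but $t_2=0$.

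The symmetry you are reaching for does not reverse the order, because both families of shifts move mass \emph{rightward}.  The correct schedule is the same left-to-right sweep as for the $u$-part: fire all $titrx_1$, then all $titrx_2$, \ldots, then all $titrx_{D-1}$, and finally all $det_D$.  After the $titrx_j$ block, position $j$ holds $t_j+g_{j-1}-g_j=t_j'$ (with $g_0=0$), which is nonnegative, so every step is legal.  This is exactly what the paper's terse ``right-shifts of \texttt{x}'s followed by exits at the right'' is pointing to.  With this one correction your argument is complete and matches the paper's.
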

  \begin{proof}
    Since each atomic arrow of Definition~\ref{domon} preserves $U\le U'$ and $T\ge T'$, so must their transitive closure.  Conversely, by considering the $u_d$'s and $t_d$'s as {\em labeled individuals}, it is readily seen that any transformation of $u$'s preserving $U\le U'$ is composable by \texttt{o}'s entering at left then shifting rightward, while any transformation of $t$'s preserving $T\ge T'$ is composable as right-shifts of \texttt{x}'s followed by exits at the right.
  \end{proof}
  Thus we have the intuitively appealing interpretation that $q\preceq_0q'$ iff $q'$ has at least as much tolerated dose intensity as $q$, without a raised toxicity profile.

  \begin{nota}
    Let $Q^D \xrightarrow{\sigma} \N^D\times\N^D$ denote the mapping $(t\!:\!u) \mapsto (T,U)$.
  \end{nota}

  \begin{nota}
     Let $\Q_0$ denote the symmetric monoidal preorder, $(Q^D,\preceq_0,\langle\frac{0}{0}\rangle,+)$.
  \end{nota}

  \begin{fact}
    $\Q_0 \xhookrightarrow{\sigma} (\N^D,\le)^{\dual}\times(\N^D,\le)$ is a monotone embedding.
  \end{fact}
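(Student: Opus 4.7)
The plan is to unpack ``monotone embedding'' as \emph{order-preserving}, \emph{order-reflecting}, and \emph{injective on underlying sets}, and verify each in turn. The first two are already essentially done for us by Fact~\ref{enst0}; only injectivity requires a separate (and easy) argument.

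First I would note that, by definition of the product preorder on $(\N^D,\le)^{\dual}\times(\N^D,\le)$, we have $\sigma(q) \le \sigma(q')$ iff $T \ge T'$ (componentwise, by the opposite order on the first factor) and $U \le U'$ (componentwise). Thus the biconditional
\[
q \preceq_0 q' \iff U \le U' \text{ and } T \ge T'
\]
supplied by Fact~\ref{enst0} is precisely the statement that $\sigma$ is simultaneously order-preserving and order-reflecting. Both the ``monotone'' and the ``full'' halves of ``monotone embedding'' are therefore immediate.

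For injectivity, I would observe that the partial-sum and reverse-partial-sum transformations are invertible on $\N^D$: setting $T_0 := 0$ and $U_{D+1} := 0$, we recover
\[
t_d = T_d - T_{d-1}, \qquad u_d = U_d - U_{d+1},
\]
and then $n_d = t_d + u_d$, so $(t/n) \in Q^D$ is determined by $\sigma(q) = (T,U)$. Hence $\sigma$ is injective on the underlying sets.

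There is no real obstacle here---the content of the embedding has been pre-packaged into Fact~\ref{enst0}, and the only remaining task is the trivial inversion of cumulative and upper-tail sums. If one wished also to record that $\sigma$ is a \emph{monoidal} embedding (which the statement does not explicitly demand), one would further note that both $T$ and $U$ are linear in $(t,u)$, so $\sigma$ sends $+$ on $Q^D$ to componentwise addition on the target, and sends the unit $\langle\tfrac{0}{0}\rangle$ to $(0,0)$.
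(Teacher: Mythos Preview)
Your proposal is correct and follows essentially the same approach as the paper's proof, which is a terse two-sentence version of what you wrote: ``Differencing $T$ and $U$ recovers $t$ and $u$, so $\sigma$ is injective. Monotonicity follows from Fact~\ref{enst0}.'' Your proposal simply unpacks these two sentences more carefully, and your additional remark on monoidality is a harmless bonus.
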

  \begin{proof}
    Differencing $T$ and $U$ recovers $t$ and $u$, so $\sigma$ is injective.  Monotonicity follows from Fact~\ref{enst0}.
  \end{proof}

  \begin{cor}
    $\Q_0$ is a lattice, since the image $\sigma\Q_0$ is closed under meets and joins.
  \end{cor}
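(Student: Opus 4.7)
My plan is to unpack what the ambient lattice structure on $(\N^D,\le)^{\dual}\times(\N^D,\le)$ looks like, characterize the image $\sigma\Q_0$ concretely, and then verify directly that this image is closed under the ambient meets and joins. Since $\sigma$ is an order embedding by the preceding Fact, any binary meet or join of $\sigma q$ and $\sigma q'$ that happens to lie in the image can be pulled back through $\sigma^{-1}$ to give a genuine meet or join in $\Q_0$; closure of the image is therefore sufficient.

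First, I would note that $\sigma\Q_0$ is exactly the set of pairs $(T,U)\in\N^D\times\N^D$ with $T$ componentwise non-decreasing (starting from $T_1\ge 0$) and $U$ componentwise non-increasing: the forward direction is immediate from $t_d,u_d\in\N$, and the backward direction follows by taking differences $t_d=T_d-T_{d-1}$ and $u_d=U_d-U_{d+1}$ (with the usual boundary conventions $T_0=0$, $U_{D+1}=0$), which recovers a valid tally in $Q^D$.

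Next, since meets and joins in a product of lattices are computed coordinatewise, and the first factor carries the opposite order, the meet and join in $(\N^D,\le)^{\dual}\times(\N^D,\le)$ of $(T,U)$ and $(T',U')$ are, respectively,
\[
(T,U)\wedge(T',U') = (\max(T,T'),\,\min(U,U')),
\qquad
(T,U)\vee(T',U') = (\min(T,T'),\,\max(U,U')),
\]
all componentwise. The key observation is then that the componentwise $\max$ of two non-decreasing sequences is non-decreasing, and similarly $\min$ preserves non-decreasingness; dually, $\max$ and $\min$ both preserve non-increasingness. Hence each of the four resulting sequences lies in the appropriate monotone class, so both $(T,U)\wedge(T',U')$ and $(T,U)\vee(T',U')$ belong to $\sigma\Q_0$.

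Finally, because $\sigma$ is an injective order embedding, setting $q\wedge q' := \sigma^{-1}(\sigma q\wedge\sigma q')$ and $q\vee q' := \sigma^{-1}(\sigma q\vee\sigma q')$ yields well-defined binary meets and joins on $\Q_0$ with the universal properties inherited from the ambient lattice. I expect no real obstacle here; the only point that needs a moment's care is keeping track of the opposite order on the first factor, so that ``meet in the ambient lattice'' correctly translates (via Fact~\ref{enst0}) to the greatest lower bound with respect to $\preceq_0$, which demands the larger toxicity profile $T$ and the smaller tolerated dose intensity $U$.
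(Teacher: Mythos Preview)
Your proposal is correct and follows exactly the approach the paper intends: the paper offers no separate proof beyond the ``since'' clause in the corollary statement, relying on the preceding Fact that $\sigma$ is a monotone embedding and on the reader to supply precisely the details you have supplied---characterizing the image as pairs of monotone sequences and noting that componentwise $\max$ and $\min$ preserve monotonicity. Your only addition is the explicit bookkeeping of the opposite order on the first factor, which the paper leaves implicit.
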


\subsection{Therapeutic intent}

The similarity of Fact~\ref{enst0} to Fact~\ref{charQ1} suggests that $(Q^D,\preceq_0,\langle\frac{0}{0}\rangle,+)$ generalizes $(Q,\preceq,\frac{0}{0},+)$ in a natural way to multiple doses.  But this generalization proves insufficient for modeling of dose-escalation trials, and requires strengthening by the recognition of additional principles.

\begin{defn}\label{therapeutic}
  Let $\preceq_1\;\supset\;\preceq_0$ denote the dose-monotone preorder relation on $Q^D$ generated upon $\preceq_0$ by including as well the following arrows:
  $$
  \begin{aligned}
    \langle\frac{1}{2}\rangle_D &\preceq_{bal} \langle\frac{0}{0}\rangle \\
    \langle\frac{1}{1},\frac{0}{1}\rangle_{j,k} &\preceq_{exch_{j,k}} \langle\frac{0}{1},\frac{1}{1}\rangle_{j,k} ,\; 1\le j<k\le D.
  \end{aligned}
  $$
  We call such a preorder \textbf{therapeutic}, for reasons to be elaborated presently.
\end{defn}

The $\preceq_{bal}$ arrows serve to break a symmetry that would otherwise exist between observed toxicity and tolerability.  They state that observed in a $1\!:\!1$ ratio at the highest dose, toxicity and non-toxicity on balance yield a less safe tally.  Intuitively, we might understand these judgments as establishing a prior expectation of toxicity rate below 0.5 {\em even at the highest dose},\footnote{This expectation becomes binding for any given dose at the time when a decision is made to {\em enroll} patients at that dose.  Upon trial initiation, this commitment is necessary only regarding the lowest dose.} so that the derogatory informational content (entropy) of a toxicity outweighs the favorable information in a non-toxicity.

Similar considerations help us to understand the $\preceq_{exch_*}$ also as breaking a toxicity--tolerability symmetry, albeit now {\em across two distinct doses}.  To appreciate the $\preceq_{exch_*}$ arrows, pick any two doses $x_1 < x_2 \in \R^+$ and consider them using the $D=2$ case of our notation.\footnote{The basic pharmacologic intuitions we aim to elicit here logically precede any such concrete details of trial design as our pre-specification of $D$ doses $x_1,...,x_D$.}  Suppose we sample pairs $(i,i')$ of distinct individuals from a population with a continuously distributed latent toxicity threshold, assigning $i$ to receive dose 1 and $i'$ to receive dose 2.  Then observing $(\tt{x},\tt{o})$ means that individual $i$ experienced toxicity at dose 1 while $i'$ tolerated dose 2.  Due to the monotonicity of dose-response, we then know that (counterfactually) had we sampled these individuals in the opposite order $(i',i)$, we would have observed $(\tt{o},\tt{x})$.  Thus each observed $(\tt{x},\tt{o})$ points to {\em an ensemble of potential samples} in which $(\tt{x},\tt{o})$ and $(\tt{o},\tt{x})$ observations match one-to-one.  But crucially, no such implication arises in the opposite direction, from an observation of $(\tt{o},\tt{x})$.  Consequently, there is a sense in which
$$
(\tt{x},\tt{o}) \implies (\tt{o},\tt{x}),
$$
so that we may say $(\tt{x},\tt{o})$ has {\em higher information content} than $(\tt{o},\tt{x})$.\footnote{Consider for example that observing $(\tt{x},\tt{o})$ absolutely excludes the possibility that dose 1 might be completely nontoxic, whereas $(\tt{o},\tt{x})$ excludes only the stronger claim that {\em both} doses are completely nontoxic.}  Provided that we chose both doses (and in particular, the higher $x_2$) with primarily {\em therapeutic intent},\footnote{See e.g. \cite{weber-reaffirming-2016} and \cite{burris-correcting-2019}, which elaborate the doctrine of {\em therapeutic intent} in early-phase cancer clinical trials.} which requires a prior expectation of toxicity substantially below 0.5, then both $(\tt{x},\tt{o})$ and $(\tt{o},\tt{x})$ must be seen to have net {\em derogatory} content regarding evident safety.  Thus, the stronger $(\tt{x},\tt{o})$ is the {\em more derogatory} of the two:
$$
\langle\frac{1}{1},\frac{0}{1}\rangle \preceq_{exch_{12}} \langle\frac{0}{1},\frac{1}{1}\rangle.
$$

\subsection{A sequence of nested preorders $\preceq_r$}

As we will see shortly, $\preceq_{bal}$ proves to be a somewhat weak condition which we can profitably strengthen in a graded manner, as follows:
\begin{nota}
  For any $r\in \N^+$, let $\preceq_{bal_r}$ denote the monoidal arrows generated by,
  $$
  \langle\frac{1}{1+r}\rangle_D \preceq_{bal_r} \langle\frac{0}{0}\rangle.
  $$
\end{nota}

\begin{fact}\label{rimplies}
  $q \preceq_{bal_r} q' \implies q \preceq_{bal} q'$.
\end{fact}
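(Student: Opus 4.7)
The plan is to bridge the $\preceq_{bal_r}$ generator to the $\preceq_{bal}$ generator through a single $\preceq_0$ step, exploiting the fact that the tallies $\langle\frac{1}{2}\rangle_D$ and $\langle\frac{1}{1+r}\rangle_D$ share a common toxicity profile and differ only in tolerated dose intensity. To begin, I would unpack the hypothesis $q \preceq_{bal_r} q'$ via the monoidal convention following Definition~\ref{domon}, extracting some ambient $b \in Q^D$ with $q = b + \langle\frac{1}{1+r}\rangle_D$ and $q' = b$.

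The technical core is a single application of Fact~\ref{enst0}. Both $\langle\frac{1}{2}\rangle_D$ and $\langle\frac{1}{1+r}\rangle_D$ have $T_d = 0$ for $d < D$ together with $T_D = 1$, so their toxicity profiles coincide; meanwhile, their tolerated dose intensities are the constant sequences $(1,\ldots,1)$ and $(r,\ldots,r)$, respectively, with the former componentwise at or below the latter for every $r \in \N^+$. Fact~\ref{enst0} thus delivers $\langle\frac{1}{2}\rangle_D \preceq_0 \langle\frac{1}{1+r}\rangle_D$, and monoidality of $\preceq_0$ lifts this to $b + \langle\frac{1}{2}\rangle_D \preceq_0 b + \langle\frac{1}{1+r}\rangle_D = q$.

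Composing by transitivity with the hypothesized arrow $q \preceq_{bal_r} q' = b$, I would conclude $b + \langle\frac{1}{2}\rangle_D \preceq b$, which is exactly the monoidal form of the $\preceq_{bal}$ generator. The main subtlety will be less in the calculation than in the reading of the conclusion: $q \preceq_{bal} q'$ must be taken to mean that the $\preceq_{bal}$ relation holds in the ambient dose-monotone preorder—which is what the transitivity argument above establishes—rather than as a bare structural identity of atomic monoidal arrows.
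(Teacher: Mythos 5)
Your proof is correct, but it takes a genuinely more direct route than the paper's. The paper never invokes Fact~\ref{enst0} here; instead it recurses on $r$, using the explicit atomic chain
$$
\langle\tfrac{1}{r}\rangle_D \;\preceq_{tol_1}\; \langle\tfrac{0}{1}\rangle_1+\langle\tfrac{1}{r}\rangle_D \;\preceq_{titro_2}\cdots\preceq_{titro_D}\; \langle\tfrac{1}{1+r}\rangle_D \;\preceq_{bal_r}\; \langle\tfrac{0}{0}\rangle
$$
to show that the $\mathrm{bal}_{r-1}$ generator is derivable from $\preceq_0$ together with $\mathrm{bal}_r$, and then descends $\mathrm{bal}_r \Rightarrow \mathrm{bal}_{r-1} \Rightarrow \cdots$ to the base case $\mathrm{bal}_1 \equiv \mathrm{bal}$. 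You telescope that entire recursion into one step: Fact~\ref{enst0} gives $\langle\frac{1}{2}\rangle_D \preceq_0 \langle\frac{1}{1+r}\rangle_D$ outright (identical toxicity profiles; tolerated dose intensities $(1,\dots,1)\le(r,\dots,r)$), after which monoidality and a single transitivity composition with the hypothesized $\mathrm{bal}_r$ arrow yield $b+\langle\frac{1}{2}\rangle_D \preceq b$. Since Fact~\ref{enst0} is established earlier and independently of the present Fact, there is no circularity; in effect your $\preceq_0$ step concatenates $r-1$ of the paper's $tol_1$--$titro$ cascades, with the induction absorbed into Fact~\ref{enst0}. What each route buys: yours is induction-free and makes the pharmacologic content transparent (extra tolerations at dose $D$ can only raise evident safety), while the paper's stays entirely within the bare atomic-arrow calculus and produces, as a byproduct, exactly the one-step descent $\mathrm{bal}_{r+1} \Rightarrow \mathrm{bal}_r$ that is reused immediately afterwards for the nesting $\Q_r \hookrightarrow \Q_{r+1}$ stated just after Definition~\ref{lesaf^D}. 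Finally, your closing interpretive caveat is not merely defensible but forced, and it applies equally to the paper's own proof: the chain there likewise establishes the $\mathrm{bal}$-generator pair $(b+\langle\frac{1}{2}\rangle_D,\,b)$ rather than relating the original $q$ to $q'$ by a literal $\mathrm{bal}$ arrow---indeed, for $r\ge 2$ one checks via Theorem~\ref{characterization} (applied with parameter $1$) that $b+\langle\frac{1}{1+r}\rangle_D \preceq_1 b$ is false, so the schematic reading is the only one under which the Fact holds.
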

\begin{proof}
  Observe that $q \preceq_{bal_r} q' \implies q \preceq_{bal_{r-1}} q'$:
  $$
  \langle\frac{0}{0},...,\frac{1}{r}\rangle \preceq_{tol_1} \langle\frac{0}{1},...,\frac{1}{r}\rangle \preceq_{titro_2}...\preceq_{titro_D} \langle\frac{0}{0},...,\frac{1}{1+r}\rangle \preceq_{bal_r} \langle\frac{0}{0}\rangle,
  $$
  allowing recursion on $r$ to the base case  $\preceq_{bal_1} \equiv \preceq_{bal}$.
\end{proof}

\begin{defn}\label{lesaf^D}
  For $r \in \N^+$, let $\preceq_r$ denote the monoidal preorder relation $(\preceq_1)\cup\{\preceq_{bal_r}\}$ on $Q^D$.
\end{defn}

\begin{nota}
  Let $\Q_r$ denote $(Q^D,\preceq_r,\langle\frac{0}{0}\rangle,+)$.
\end{nota}

\begin{fact}
  The $(\Q_r)_{r\in\N}$ form a nested sequence of subcategories, $\Q_r \hookrightarrow \Q_{r+1}.$
\end{fact}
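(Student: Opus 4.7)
The plan is to unpack $\Q_r \hookrightarrow \Q_{r+1}$ as the assertion that the identity map on $Q^D$ underlies a faithful symmetric monoidal functor $\Q_r \to \Q_{r+1}$. Because the prospective functor is identity on objects and on $+$, and because hom-sets in any preorder category are either empty or singletons, faithfulness is automatic and the only substantive content is the relational inclusion $\preceq_r \,\subseteq\, \preceq_{r+1}$. For $r=0$ this inclusion is literally Definition~\ref{therapeutic}, which posits $\preceq_1\,\supset\,\preceq_0$; so the remaining work concerns $r \ge 1$.

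I would next reduce to a statement about generators. By Definition~\ref{lesaf^D}, the monoidal preorder $\preceq_r$ is generated by $\preceq_1$ together with the single atomic arrow $\langle\frac{1}{1+r}\rangle_D \preceq_{bal_r} \langle\frac{0}{0}\rangle$, and $\preceq_{r+1}$ similarly. Since $\preceq_1$ is common to both, it suffices to exhibit the atomic $\preceq_{bal_r}$ arrow inside $\preceq_{r+1}$; monoidality and transitivity of $\preceq_{r+1}$ then carry this through to the entire generated relation, including every translate $q+\langle\frac{1}{r}\rangle_D \preceq_{r+1} q$.

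For the key derivation I would re-index the chain already used to prove Fact~\ref{rimplies}, substituting $r+1$ for $r$. Starting at $\langle\frac{1}{r}\rangle_D$, apply $\preceq_{tol_1}$ to inject a $\frac{0}{1}$ at dose $1$, then titrate via $\preceq_{titro_2},\dots,\preceq_{titro_D}$ until the new \texttt{o} sits at dose $D$, at which point the tally reads $\langle\frac{1}{r+1}\rangle_D$; one final step along $\preceq_{bal_{r+1}}$ delivers $\langle\frac{0}{0}\rangle$. Every intermediate arrow lies in $\preceq_0 \subseteq \preceq_1 \subseteq \preceq_{r+1}$, and the terminal arrow is the generator newly adjoined in $\preceq_{r+1}$, so transitivity places the atomic $\preceq_{bal_r}$ arrow inside $\preceq_{r+1}$, as required.

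I do not foresee a serious obstacle: the crux is simply a shifted instance of the argument already given for Fact~\ref{rimplies}, and the categorical embedding claim then follows formally from the relational inclusion together with the preorder triviality of hom-sets. The only small care required is to confirm that the monoidal closure in $\preceq_{r+1}$ is available to us when lifting the single atomic $\preceq_{bal_r}$-arrow to all its translates; this is immediate since $\Q_{r+1}$ was built as a symmetric monoidal preorder from the outset.
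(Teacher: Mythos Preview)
Your plan is exactly what the paper intends: the Fact is stated without proof immediately after Fact~\ref{rimplies}, and is meant to follow from that same chain re-indexed by $r\mapsto r+1$.

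There is, however, an off-by-one slip in your written-out derivation. You correctly quote the atomic arrow as $\langle\frac{1}{1+r}\rangle_D \preceq_{bal_r} \langle\frac{0}{0}\rangle$, so to exhibit $\preceq_{bal_r}$ inside $\preceq_{r+1}$ the chain must start at $\langle\frac{1}{1+r}\rangle_D$, not at $\langle\frac{1}{r}\rangle_D$. After injecting one \texttt{o} via $\preceq_{tol_1}$ and titrating it up to dose $D$, the tally reads $\langle\frac{1}{2+r}\rangle_D$, which is precisely where $\preceq_{bal_{r+1}}$ applies. The same index slip recurs in your parenthetical ``translates $q+\langle\frac{1}{r}\rangle_D \preceq_{r+1} q$'', which should read $q+\langle\frac{1}{1+r}\rangle_D$. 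With these indices fixed, your argument is complete and matches the paper's implicit one.
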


\subsection{An explicit characterization of $\preceq_r$}

Note that it is {\em almost} the case that $\preceq_{exch_{12}}\preceq_{exch_{23}}\, = \,\preceq_{exch_{13}}$.  We have for example that
$$
\begin{tikzcd}[row sep=large,column sep=small]
  (\frac{1}{1},\frac{0}{1},\frac{0}{1}) \ar[rr,"\preceq_{exch_{13}}"] & & (\frac{0}{1},\frac{0}{1},\frac{1}{1}) \\
  & \ar[from=ul,"\preceq_{exch_{12}}"'] (\frac{0}{1},\frac{1}{1},\frac{0}{1}) \ar[ur,"\preceq_{exch_{23}}"']
\end{tikzcd}
\quad\text{yet}\quad
\begin{tikzcd}[row sep=large,column sep=small]
  (\frac{1}{1},\frac{0}{0},\frac{0}{1}) \ar[rr,"\preceq_{exch_{13}}"] & & (\frac{0}{1},\frac{0}{0},\frac{1}{1}) \\
  & \ar[from=ul,dashed,"\preceq_{exch_{12}}"'] (\frac{0}{1},\frac{0}{-1},\frac{0}{1}) \ar[ur,dashed,"\preceq_{exch_{23}}"']
\end{tikzcd},
$$
illustrating that $q\preceq_{exch_{13}}q'' \centernot\implies \exists q' \ni q\preceq_{exch_{12}}q'\preceq_{exch_{23}}q''$ because we cannot `borrow' against a zero count.
\begin{defn}
  Let $\Delta Q^D = (Q^D-Q^D)/\!\triangleq$ denote equivalence classes $[q-q']$ of formal differences between $q,q'\in Q^D$ under the equivalence relation $(q_1-q_1') \triangleq (q_2-q_2') \iff q_1+q_2' = q_2+q_1'$.
\end{defn}

\begin{fact}
  $\Delta Q^D$ is obviously an Abelian group with the operation $(+)$ it inherits from $Q^D$, and hence a commutative ring over $\Z$ with multiplication $(\cdot)$ defined in the natural way.
\end{fact}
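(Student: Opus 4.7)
The plan is to recognize this as a standard Grothendieck completion, applied to $(Q^D, +, \langle\frac{0}{0}\rangle)$ equipped with a natural componentwise multiplication.

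For the Abelian-group half: reflexivity and symmetry of $\triangleq$ are immediate. Transitivity hinges on cancellativity of $(Q^D, +)$: combining $q_1 + q_2' = q_2 + q_1'$ with $q_2 + q_3' = q_3 + q_2'$ via addition gives $q_1 + q_3' + (q_2 + q_2') = q_3 + q_1' + (q_2 + q_2')$, after which we need to cancel $(q_2 + q_2')$. Cancellativity of $(Q^D, +)$ follows componentwise because an equality $(t+s)/(n+m) = (t'+s)/(n'+m)$ in $Q$ unpacks to $t+s = t'+s$ and $n+m = n'+m$ in the cancellative monoid $(\N, +)$. With $\triangleq$ thus shown to be an equivalence, the inherited addition $[q_1 - q_1'] + [q_2 - q_2'] := [(q_1 + q_2) - (q_1' + q_2')]$ is well-defined and inherits associativity and commutativity from $Q^D$; the identity is $[\langle\frac{0}{0}\rangle - \langle\frac{0}{0}\rangle]$ and $-[q - q'] = [q' - q]$.

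For the ring half: endow $Q$ with the natural product $(t/n)(t'/n') := (tt')/(nn')$ and unit $1/1$. Distributivity over $+$ is a one-line verification,
\[
\frac{t}{n} \cdot \left(\frac{s}{m} + \frac{s'}{m'}\right) \;=\; \frac{t(s+s')}{n(m+m')} \;=\; \frac{ts}{nm} + \frac{ts'}{nm'},
\]
and commutativity is inherited from $\N$. Componentwise extension yields a commutative, distributive product on $Q^D$ with unit $(1/1,\ldots,1/1)$, which the usual Grothendieck formula
\[
[q_1 - q_1'] \cdot [q_2 - q_2'] \;:=\; [(q_1 q_2 + q_1' q_2') - (q_1 q_2' + q_1' q_2)]
\]
lifts to a product on $\Delta Q^D$; well-definedness modulo $\triangleq$ follows by bilinearity, and commutativity transfers directly. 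Being a commutative ring with unit, $\Delta Q^D$ then carries the unique ring homomorphism from $\Z$ and is thereby a $\Z$-algebra.

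The only point of substance --- and hence the ``main obstacle,'' such as it is --- is the cancellativity step, since it is precisely what permits the simple two-term form of $\triangleq$ rather than the more cumbersome $\exists e\ (q_1 + q_2' + e = q_2 + q_1' + e)$ required for general (non-cancellative) monoids. All remaining verifications are routine Grothendieck bookkeeping.
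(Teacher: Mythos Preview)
The paper offers no proof for this Fact, declaring it ``obvious.'' Your Grothendieck-completion argument for the Abelian-group structure is correct and appropriately isolates cancellativity of $(Q^D,+)$---inherited componentwise from $(\N,+)$---as the one substantive point enabling the two-term form of~$\triangleq$.

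For the ring structure, your componentwise product $(t/n)(t'/n') := (tt')/(nn')$ on $Q$, extended to $Q^D$ and then lifted to $\Delta Q^D$ by the standard Grothendieck formula, is valid and does make $\Delta Q^D$ a commutative ring with unit. It is worth noting, however, that the paper never actually invokes this internal multiplication: every subsequent occurrence of $(\cdot)$ is scalar multiplication of an element of $\Delta Q^D$ by an \emph{integer} (as in $\eta_d\cdot\mathrm{titro}_d$ or $r\cdot\mathrm{tol}_1$). This strongly suggests that ``commutative ring over $\Z$'' is loose language for ``$\Z$-module,'' a structure every Abelian group carries automatically---which would also account for the otherwise puzzling ``hence.'' Your ring multiplication is compatible with that $\Z$-action (the image of $r\in\Z$ under your unit map acts by $r$-fold addition), so nothing is incorrect; you have simply supplied more structure than the paper ever uses.
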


\begin{nota}
For each atomic arrow $\preceq_a$ let us recognize the corresponding formal difference $a \in \Delta Q^D$:
$$
\begin{aligned}\label{atoms}
  \mathrm{tol}_1 &= [\langle\frac{0}{1}\rangle_1 - \langle\frac{0}{0}\rangle] \\
  \mathrm{titro}_j &= [\langle\frac{0}{1}\rangle_j - \langle\frac{0}{1}\rangle_{j-1}] \\
  \mathrm{titrx}_j &= [\langle\frac{1}{1}\rangle_{j+1} - \langle\frac{1}{1}\rangle_j] \\
  \mathrm{det}_D &= [\langle\frac{0}{0}\rangle - \langle\frac{1}{1}\rangle_D] \\
  \mathrm{bal}_r &= [\langle\frac{0}{0}\rangle - \langle\frac{1}{1+r}\rangle_D] \\
  \mathrm{exch}_{j,k} &= [\langle\frac{0}{1},\frac{1}{1}\rangle_{j,k} - \langle\frac{1}{1},\frac{0}{1}\rangle_{j,k}],
\end{aligned}
$$
and denote the implied embedding of the relation $\preceq_r$ in $\Delta Q^D$ as $\preceq_r \xhookrightarrow{\phi} \Delta Q^D$.
\end{nota}

\begin{fact}\label{titrxelim}
  $\mathrm{titrx}_j = \mathrm{exch}_{j,j+1} + \mathrm{titro}_{j+1}$.  
\end{fact}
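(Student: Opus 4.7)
The plan is to verify the identity directly in the abelian group $\Delta Q^D$, by unfolding the definitions of the three formal differences and cancelling appropriately. Since $\Delta Q^D$ inherits $+$ from $Q^D$ componentwise on representatives, it suffices to show that the obvious representatives of both sides agree modulo $\triangleq$.

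First I would rewrite the two-slot notations using the additive generators. By definition, $\langle\frac{0}{1},\frac{1}{1}\rangle_{j,j+1} = \langle\frac{0}{1}\rangle_j + \langle\frac{1}{1}\rangle_{j+1}$ and $\langle\frac{1}{1},\frac{0}{1}\rangle_{j,j+1} = \langle\frac{1}{1}\rangle_j + \langle\frac{0}{1}\rangle_{j+1}$, so that
\[
  \mathrm{exch}_{j,j+1} = \bigl[\langle\tfrac{0}{1}\rangle_j + \langle\tfrac{1}{1}\rangle_{j+1} \;-\; \langle\tfrac{1}{1}\rangle_j - \langle\tfrac{0}{1}\rangle_{j+1}\bigr].
\]
Then I would add $\mathrm{titro}_{j+1} = [\langle\tfrac{0}{1}\rangle_{j+1} - \langle\tfrac{0}{1}\rangle_j]$ to this representative. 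In the combined expression the two copies of $\langle\tfrac{0}{1}\rangle_{j+1}$ (with opposite signs) and the two copies of $\langle\tfrac{0}{1}\rangle_j$ (with opposite signs) cancel, leaving exactly $\langle\tfrac{1}{1}\rangle_{j+1} - \langle\tfrac{1}{1}\rangle_j$, which is the designated representative of $\mathrm{titrx}_j$.

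There is essentially no obstacle here: the only thing to be careful about is that equality in $\Delta Q^D$ is equality of equivalence classes, so strictly speaking the cancellations above should be framed as an instance of the defining relation $(q_1-q_1')\triangleq(q_2-q_2') \iff q_1+q_2' = q_2+q_1'$ applied to the sums of representatives in $Q^D$. Once that bookkeeping is acknowledged, the identity $\mathrm{titrx}_j = \mathrm{exch}_{j,j+1} + \mathrm{titro}_{j+1}$ is immediate. The underlying pharmacologic reading is also transparent: a dose reduction from an intolerable $j+1$ to an intolerable $j$ is equivalent to swapping an $(\mathtt{x},\mathtt{o})$ pair at $(j,j+1)$ and then titrating the surviving $\mathtt{o}$ from $j$ up to $j+1$.
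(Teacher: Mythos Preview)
Your proposal is correct. The paper states this Fact without proof, treating it as an immediate computation in $\Delta Q^D$; your direct unfolding of the definitions and cancellation via the equivalence $\triangleq$ is precisely the verification the paper leaves implicit.
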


\begin{fact}\label{detelim}
  $\mathrm{det}_D = (\mathrm{tol}_1 + \mathrm{titro}_2 + ... + \mathrm{titro}_D)\cdot r + \mathrm{bal}_r$.  
\end{fact}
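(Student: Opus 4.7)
The plan is to work entirely inside the ring $\Delta Q^D$, reducing both sides to a common representative formal difference and checking equality via the defining equivalence $(q_1-q_1')\triangleq(q_2-q_2')\iff q_1+q_2' = q_2+q_1'$. The proof is essentially a telescoping computation, with the one subtle point being how scalar multiplication by $r\in\Z$ interacts with the fraction-addition law $\frac{a}{b}+\frac{c}{d}=\frac{a+c}{b+d}$.

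First I would compute the bracketed sum. The arrows $\mathrm{tol}_1,\mathrm{titro}_2,\ldots,\mathrm{titro}_D$ are designed to telescope: applying the definitions, their sum collapses to $[\langle\frac{0}{1}\rangle_D - \langle\frac{0}{0}\rangle]$, since each intermediate $\langle\frac{0}{1}\rangle_{j}$ appears once with each sign. Next I would multiply this by $r$ in the commutative ring $\Delta Q^D$; since scalar multiplication by a positive integer $r$ reduces to $r$-fold addition in the Abelian group, and since $r$-fold addition of $\langle\frac{0}{1}\rangle_D$ in $Q^D$ equals $\langle\frac{0}{r}\rangle_D$ (while $r\cdot\langle\frac{0}{0}\rangle = \langle\frac{0}{0}\rangle$), the product becomes $[\langle\frac{0}{r}\rangle_D - \langle\frac{0}{0}\rangle]$.

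Adding $\mathrm{bal}_r = [\langle\frac{0}{0}\rangle - \langle\frac{1}{1+r}\rangle_D]$ and simplifying the formal difference (the $\langle\frac{0}{0}\rangle$ terms cancel modulo $\triangleq$) yields the representative $[\langle\frac{0}{r}\rangle_D - \langle\frac{1}{1+r}\rangle_D]$. Finally I would verify that this class coincides with $\mathrm{det}_D = [\langle\frac{0}{0}\rangle - \langle\frac{1}{1}\rangle_D]$ by checking the equivalence
\[
\langle\tfrac{0}{r}\rangle_D + \langle\tfrac{1}{1}\rangle_D \;=\; \langle\tfrac{0}{0}\rangle + \langle\tfrac{1}{1+r}\rangle_D,
\]
which holds because both sides equal $\langle\tfrac{1}{1+r}\rangle_D$ by the fraction-addition rule.

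The main obstacle, such as it is, is notational rather than conceptual: one must be careful that the ``$r$-multiplication'' invoked on the right-hand side of the claim is the ring multiplication of $\Delta Q^D$ (equivalently, $\Z$-scaling of the underlying Abelian group), not the somewhat counterintuitive fraction arithmetic $\tfrac{0}{1}+\tfrac{0}{1}=\tfrac{0}{2}$ read back through the embedding $Q^D\hookrightarrow\Delta Q^D$. Once this is unpacked, every remaining step is direct verification.
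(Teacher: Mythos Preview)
Your argument is correct. The telescoping sum $\mathrm{tol}_1+\mathrm{titro}_2+\cdots+\mathrm{titro}_D=[\langle\tfrac{0}{1}\rangle_D-\langle\tfrac{0}{0}\rangle]$, the $\Z$-scaling to $[\langle\tfrac{0}{r}\rangle_D-\langle\tfrac{0}{0}\rangle]$, and the final equivalence check all go through exactly as you describe. The paper itself states this Fact without proof, treating it as a routine verification in $\Delta Q^D$; your write-up supplies precisely that verification.

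One small remark on your closing paragraph: the distinction you draw between ``$\Z$-scaling'' and ``fraction arithmetic read back through the embedding'' is not really a distinction at all. The $\Z$-action on $\Delta Q^D$ \emph{is} repeated addition, and repeated addition in $Q^D$ \emph{is} the fraction-addition rule $\tfrac{0}{1}+\tfrac{0}{1}=\tfrac{0}{2}$; these agree, and that agreement is exactly what makes your step $r\cdot[\langle\tfrac{0}{1}\rangle_D-\langle\tfrac{0}{0}\rangle]=[\langle\tfrac{0}{r}\rangle_D-\langle\tfrac{0}{0}\rangle]$ work. So there is no subtlety to guard against here---the computation is uniformly straightforward.
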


\begin{fact}\label{excheqs}
  $\mathrm{exch}_{jk} + \mathrm{exch}_{k\ell} = \mathrm{exch}_{j\ell}$.
  \begin{proof}
    The freedom to choose class representatives affords us the necessary `license to borrow'.
  \end{proof}
\end{fact}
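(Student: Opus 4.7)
The plan is to verify the identity directly in the commutative ring $\Delta Q^D$, by unfolding both sides as formal differences and invoking the defining equivalence $(q_1-q_1')\triangleq(q_2-q_2')\iff q_1+q_2' = q_2+q_1'$.

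First I would expand the left-hand side as a single formal difference
\[
\mathrm{exch}_{jk}+\mathrm{exch}_{k\ell} = \bigl[\bigl(\langle\tfrac{0}{1},\tfrac{1}{1}\rangle_{j,k} + \langle\tfrac{0}{1},\tfrac{1}{1}\rangle_{k,\ell}\bigr) - \bigl(\langle\tfrac{1}{1},\tfrac{0}{1}\rangle_{j,k} + \langle\tfrac{1}{1},\tfrac{0}{1}\rangle_{k,\ell}\bigr)\bigr],
\]
reducing each bracketed sum by the monoidal addition inherited from $Q^D$. The positive part will then carry $\tfrac{0}{1}$ at $j$, $\tfrac{1}{1}+\tfrac{0}{1}=\tfrac{1}{2}$ at $k$, and $\tfrac{1}{1}$ at $\ell$; the negative part carries $\tfrac{1}{1}$, $\tfrac{0}{1}+\tfrac{1}{1}=\tfrac{1}{2}$, and $\tfrac{0}{1}$ at the same three positions. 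By contrast, $\mathrm{exch}_{j\ell}$ unfolds to a formal difference with $\tfrac{0}{0}$ at position $k$ on both sides. Checking the cross-sum condition componentwise is then immediate: positions $j$ and $\ell$ each contribute $\tfrac{0}{1}+\tfrac{1}{1}=\tfrac{1}{2}$ on both sides, position $k$ contributes $\tfrac{1}{2}+\tfrac{0}{0}=\tfrac{1}{2}$ on both sides, and all other coordinates are trivially $\tfrac{0}{0}$.

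There is no real technical obstacle; the hinted ``license to borrow'' is the conceptual content of the argument rather than a hurdle. The analogous identity at the level of preorder arrows would demand an explicit composition $q\preceq_{exch_{jk}}q'\preceq_{exch_{k\ell}}q''$, which can fail when the starting tally lacks the $\tfrac{1}{1}$ at position $k$ needed to shuffle --- precisely the phenomenon exhibited by the dashed-arrow counterexample displayed just above the Fact. Passing to the free Abelian quotient $\Delta Q^D$ effectively permits adding a matched auxiliary tally at position $k$ to both endpoints of the difference without changing its class, restoring the composability that fails at the level of concrete preorder arrows.
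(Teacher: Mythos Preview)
Your proof is correct and is exactly the explicit verification that the paper's one-line hint (``the freedom to choose class representatives affords us the necessary `license to borrow''') gestures at. You have simply unpacked the cross-sum check in $\Delta Q^D$ that the paper leaves implicit, and your closing paragraph accurately identifies the conceptual point.
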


\begin{fact}
  The embedding $\preceq_r \xhookrightarrow{\phi} \Delta Q^D$ is a monoid homomorphism.
\end{fact}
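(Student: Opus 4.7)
The plan is to unpack the monoid structures on both sides and verify the two axioms of a monoid homomorphism directly. On the domain: viewing $\preceq_r$ as the set of pairs $(q,q')\in Q^D\times Q^D$ with $q\preceq_r q'$, monoidality of the preorder tells us that $\preceq_r$ is closed under the componentwise operation $(q_1,q_1')+(q_2,q_2'):=(q_1+q_2,\,q_1'+q_2')$, with unit the identity pair $(\langle\tfrac{0}{0}\rangle,\langle\tfrac{0}{0}\rangle)$. On the codomain: $(\Delta Q^D,+,0)$ is the additive group from the preceding Fact. The map $\phi$ sends a pair $(q,q')$ to the class $[q'-q]$, and one first checks that this agrees on the atomic generators with the formal differences listed in the \textbf{Notation}---this is immediate by inspection.

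Next I would verify preservation of the unit: $\phi(\langle\tfrac{0}{0}\rangle,\langle\tfrac{0}{0}\rangle)=[\langle\tfrac{0}{0}\rangle-\langle\tfrac{0}{0}\rangle]=0$. Then preservation of the operation reduces to the calculation
$$
\phi\bigl((q_1,q_1')+(q_2,q_2')\bigr)=[(q_1'+q_2')-(q_1+q_2)]=[q_1'-q_1]+[q_2'-q_2]=\phi(q_1,q_1')+\phi(q_2,q_2'),
$$
where the middle equality is the definition of addition of formal-difference classes in the Abelian group $\Delta Q^D$ (equivalently, one applies $\triangleq$: $(q_1'+q_2')+(q_1+q_2)=(q_1+q_2)+(q_1'+q_2')$, rearranged componentwise).

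The one point requiring a line of comment---and what I would expect to be the only mildly subtle step---is that $\phi$ is well-defined on arrows of $\preceq_r$, since a single pair $(q,q')$ may be witnessed by many distinct derivations from atomic generators. But because $\phi(q,q')=[q'-q]$ depends only on the endpoints, any two such derivations assign the same value; equivalently, the telescoping sum of atomic formal-differences along any derivation collapses to $[q'-q]$. With that observation in hand, the homomorphism property is essentially bookkeeping, and no real obstacle remains.
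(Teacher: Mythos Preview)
Your verification is correct. The paper states this Fact without proof, treating it as self-evident; your direct check of the unit and multiplicativity axioms via $\phi(q,q')=[q'-q]$ is exactly the routine calculation one expects, and your remark on well-definedness (dependence only on endpoints) is apt even if the paper does not bother to spell it out.

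One minor caveat worth noting for your own understanding: the word ``embedding'' in the paper's phrasing is loose, since $\phi$ is visibly non-injective on pairs---e.g.\ $(q,q')$ and $(q+b,q'+b)$ have the same image. That imprecision is the paper's, not yours, and does not affect the monoid-homomorphism claim you were asked to verify.
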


\begin{nota}\label{zeroed}
  For $q\in\Q$, we write $[q-\langle\frac{0}{0}\rangle]$ simply as $[q]$, and likewise $[\langle\frac{0}{0}\rangle-q]$ as $[-q]$.
\end{nota}

\begin{fact}
  Under Notation~\ref{zeroed}, $[q'-q] = [q']-[q]$ and $[q']-[q]=[\langle\frac{0}{0}\rangle] \iff q'=q$.
\end{fact}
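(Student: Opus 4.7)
Both identities should follow by unwinding Notation~\ref{zeroed} and the definition of $\Delta Q^D$, so I expect no real obstacle---just bookkeeping with formal differences.

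For the first identity $[q'-q]=[q']-[q]$, I would compute $[q']-[q]$ by rewriting each term via Notation~\ref{zeroed} as $[q']=[q'-\langle\frac{0}{0}\rangle]$ and $-[q]=[\langle\frac{0}{0}\rangle-q]$, then applying the group operation inherited from $(Q^D,+)$, which on formal-difference representatives is $[a-b]+[c-d]=[(a+c)-(b+d)]$. Unitality of $\langle\frac{0}{0}\rangle$ in $Q^D$ then collapses the result cleanly to $[q'-q]$.

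For the second claim, I would apply the first identity to rewrite $[q']-[q]$ as $[q'-q]$. Noting that $[\langle\frac{0}{0}\rangle]$ abbreviates $[\langle\frac{0}{0}\rangle-\langle\frac{0}{0}\rangle]$, the definition of $\triangleq$ reduces the equality $[q'-q]=[\langle\frac{0}{0}\rangle]$ to the condition $q'+\langle\frac{0}{0}\rangle = \langle\frac{0}{0}\rangle+q$, which by unitality is exactly $q'=q$. The converse direction is immediate.

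The one subtlety worth flagging is that the stated relation $\triangleq$ is a genuine equivalence relation (in particular transitive) only if $(Q^D,+)$ is cancellative; otherwise one would need to append an existential ``fudge'' witness to the definition. Cancellativity is immediate, however: the componentwise embedding $Q\hookrightarrow\N\times\N$ sending $\frac{t}{n}\mapsto(t,n-t)$ identifies $+$ on $Q$ with componentwise $\N$-addition, which is cancellative, and the property passes to $Q^D$. So the Grothendieck-style construction of $\Delta Q^D$ goes through exactly as stated, and the conclusion is direct.
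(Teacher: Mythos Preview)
Your proposal is correct; the paper states this Fact without proof, treating it as an immediate consequence of the definitions, and your unwinding is exactly the canonical verification one would supply. Your remark on cancellativity is a genuine point the paper leaves implicit: the relation $\triangleq$ as written is transitive only because $(Q^D,+)$ is cancellative, and your embedding $\frac{t}{n}\mapsto(t,n-t)$ into $(\N\times\N,+)$ is the cleanest way to see this.
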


\begin{fact}\label{coefs}
  For any $[\Delta q]\in \Delta Q^D$, we have
  \begin{equation}\label{characterize}
    [\Delta q] = \eta_1\cdot\mathrm{tol}_1 + \sum_{d=2}^D\eta_d\cdot\mathrm{titro}_d + \sum_{d=1}^{D-1}\gamma_d\cdot\mathrm{exch}_{d,d+1} + \gamma_D\cdot\mathrm{bal_r}
  \end{equation}
  for a unique vector $(\mathbf{\eta},\mathbf{\gamma}) = (\eta_1, ..., \eta_D, \gamma_1, ..., \gamma_D) \in \Z^{2D}$.
\end{fact}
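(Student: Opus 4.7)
The plan is to identify $\Delta Q^D$ with $\Z^{2D}$ and show that the $2D$ listed elements form a $\Z$-basis, from which both existence and uniqueness of $(\eta,\gamma)$ follow at once. First, observe that $Q$ is the free commutative monoid on the two generators $\frac{1}{1}$ and $\frac{0}{1}$, under the identification $\frac{t}{n} \leftrightarrow (t,\,n-t)$; hence $Q^D \cong \N^{2D}$ as commutative monoids, and its Grothendieck group is $\Delta Q^D \cong \Z^{2D}$. I take coordinates $(\Delta t_1,\ldots,\Delta t_D,\Delta n_1,\ldots,\Delta n_D)$ for the difference $[\Delta q]$.

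Reading each generator off the atomic formulas above gives: $\mathrm{tol}_1$ sends $n_1 \mapsto +1$; $\mathrm{titro}_j$ sends $n_{j-1}\mapsto -1,\; n_j\mapsto +1$; $\mathrm{exch}_{d,d+1}$ sends $t_d\mapsto -1,\; t_{d+1}\mapsto +1$; and $\mathrm{bal}_r$ sends $t_D\mapsto -1,\; n_D \mapsto -(1+r)$. I would assemble these as rows of a $2D\times 2D$ matrix in the stated order, with columns ordered $(n_1,\ldots,n_D,\, t_1,\ldots,t_D)$, producing block lower-triangular form. The top-left $D\times D$ block---rows $\mathrm{tol}_1,\mathrm{titro}_2,\ldots,\mathrm{titro}_D$ against the $n_*$ columns---is lower bidiagonal with $1$s on the diagonal, of determinant $1$. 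The bottom-right $D\times D$ block---rows $\mathrm{exch}_{1,2},\ldots,\mathrm{exch}_{D-1,D},\mathrm{bal}_r$ against the $t_*$ columns---is upper bidiagonal with $-1$s on the diagonal, of determinant $(-1)^D$. The top-right block vanishes, so the whole matrix has determinant $\pm 1$, and the $2D$ generators constitute a $\Z$-basis of $\Delta Q^D$.

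Existence and uniqueness of $(\eta,\gamma) \in \Z^{2D}$ are then immediate. For concreteness, back-substitution in the triangular system yields the explicit closed forms $\gamma_d = -\sum_{j\le d}\Delta t_j$ and $\eta_j = \sum_{k\ge j}\Delta n_k + (1+r)\gamma_D$. The only mild obstacle is confirming that the lone off-diagonal-block entry, namely the $-(1+r)$ contributed by $\mathrm{bal}_r$ in the $n_D$ column, sits in the lower-left block rather than the upper-right, which it does once $\mathrm{bal}_r$ is placed in the last row; beyond that, the argument is pure bookkeeping.
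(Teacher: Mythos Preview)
Your argument is correct and is essentially the same as the paper's: both identify $\Delta Q^D$ with $\Z^{2D}$ via $(\Delta t,\Delta n)$ coordinates and observe that the resulting linear system is triangular with units on the diagonal. The paper simply writes out the recurrences $\Delta t_1=-\gamma_1$, $\Delta t_d=\gamma_{d-1}-\gamma_d$, $\sum_d\Delta n_d=\eta_1-(1+r)\gamma_D$, $\Delta n_d=\eta_d-\eta_{d+1}$ and solves them in sequence, whereas you package the same observation as a block-triangular matrix with determinant $\pm1$; your back-substitution formulas reproduce exactly the paper's solution.
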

\begin{proof}
  Write $[\Delta q] = [\frac{\Delta t}{\Delta n}]$, with $\Delta t = (t_1,...,t_D)$ and $\Delta n = (n_1,...,n_D)$, both in $\Z^D$.  Then \eqref{characterize} is equivalent to the following linear recurrence relations, easily solved in sequence for unique $\gamma_1,...,\gamma_D,\eta_1,...,\eta_D$:
$$
  \begin{aligned}
    \Delta t_1 &= - \gamma_1 \\
    \quad \Delta t_d &= \gamma_{d-1} - \gamma_d,\; d\in\{2,...,D\} \\
    \sum_d\Delta n_d &= \eta_1 - (1+r)\gamma_D \\
    \Delta n_d &= \eta_d-\eta_{d+1},\; d\in\{1,...,D-1\}.
  \end{aligned}
$$
\end{proof}

\begin{nota}
  In view of the embedding $\Delta Q^D \xhookrightarrow{(\eta,\gamma)} \Z^{2D}$ implied by Fact~\ref{coefs}, we will treat $(\eta,\gamma)$ as an alternative representation of $\Delta Q^D$, writing $[\Delta q] = (\eta,\gamma)$, $[\Delta q'] = (\eta',\gamma')$, and so forth.
\end{nota}

\begin{thm}\label{characterization}
  $q \preceq_r q' \iff [q'-q] = (\eta,\gamma) \in \N^{2D}$.
\end{thm}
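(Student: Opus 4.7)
The plan is to prove the two inclusions separately, with the forward direction being nearly immediate and the backward direction requiring a careful inductive construction of an atomic chain.

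For the forward direction, I would set $R := \{(q,q') \in Q^D \times Q^D : [q'-q] \in \N^{2D}\}$ and show that $R$ is a monoidal preorder on $Q^D$ containing every atomic generator of $\preceq_r$, whence $\preceq_r \subseteq R$. Reflexivity, transitivity, and monoidality of $R$ follow directly from the group structure of $\Delta Q^D$: $[q-q]=0$, $[q''-q]=[q''-q']+[q'-q]$, and $[(q'+p)-(q+p)]=[q'-q]$. Containment of the atomic generators splits into two sorts: the basis arrows $\mathrm{tol}_1$, $\mathrm{titro}_d$, $\mathrm{exch}_{d,d+1}$, and $\mathrm{bal}_r$ correspond to standard basis vectors of $\N^{2D}$ by Fact~\ref{coefs}; while the non-basis generators $\mathrm{titrx}_j$, $\mathrm{det}_D$, and $\mathrm{exch}_{j,k}$ with $k-j > 1$ decompose into sums of basis generators with nonnegative integer coefficients via Facts~\ref{titrxelim}, \ref{detelim}, and \ref{excheqs}, respectively.

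For the backward direction, I would proceed by strong induction on $\Sigma(\eta,\gamma) := \sum_d \eta_d + \sum_d \gamma_d$. The base case $\Sigma = 0$ forces $[q'-q] = 0$, hence $q = q'$ by the uniqueness in Fact~\ref{coefs}, so $q \preceq_r q'$ by reflexivity. For the step, I identify an atomic arrow of $\preceq_r$ that strictly reduces $\Sigma$ while keeping the residual $(\eta,\gamma)$ in $\N^{2D}$. If $\eta_1 > 0$, forward-apply $\mathrm{tol}_1$ to $q$: the tally $q'' := q + \langle\tfrac{0}{1}\rangle_1$ is unconditionally in $Q^D$, and $[q'-q''] = (\eta - e_{\eta,1}, \gamma)$. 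If $\gamma_D > 0$, instead fix the \emph{final} step of the chain as $q'' \preceq_{\mathrm{bal}_r} q'$ with $q'' := q' + \langle\tfrac{1}{1+r}\rangle_D$, which is also unconditionally in $Q^D$, reducing the problem to $q \preceq_r q''$ with $[q''-q]=(\eta,\gamma - e_{\gamma,D})$. In both cases the IH closes the argument.

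The remaining case is where $\eta_1 = \gamma_D = 0$ but some interior coefficient is positive. Here the equations of Fact~\ref{coefs} specialize to $\Delta N_1 = 0$ and $\Delta T_D = 0$, so total tolerated count and total toxic count are both preserved; meanwhile $\Delta N_d \geq 0$ and $\Delta T_d \leq 0$ throughout. Thus $q$ and $q'$ differ only by an upward rearrangement of toxic tokens (witnessed by $\gamma_d > 0$) and a compensating upward rearrangement of tolerated tokens (witnessed by $\eta_d > 0$), and the operations at my disposal are exactly $\mathrm{titro}_d$ ($d \geq 2$) and $\mathrm{exch}_{d,d+1}$ ($d \leq D-1$). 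I would choose the smallest index $d^*$ at which $q$ and $q'$ disagree and forward-apply $\mathrm{exch}_{d^*,d^*+1}$ or $\mathrm{titro}_{d^*}$, with the precondition supplied either directly by minimality of $d^*$ or by a preparatory reverse-application at $q'$.

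The main obstacle is this third case: unlike $\mathrm{tol}_1$ and $\mathrm{bal}_r$, the interior arrows $\mathrm{titro}_d$ and $\mathrm{exch}_{d,d+1}$ carry nontrivial preconditions ($u_{d-1} > 0$ and $t_d \geq 1,\,u_{d+1} > 0$, respectively), and it is not a priori obvious that at least one is available from $(\eta,\gamma) \in \N^{2D}$ alone. I expect the resolution to lean on a leftmost-disagreement argument: if $d^* = \min\{d : t_d \neq t'_d \text{ or } n_d \neq n'_d\}$, then cancellation of the telescoping sums forces $\gamma_{d^*-1} = 0 = \eta_{d^*-1}$, which combined with the sign conditions yields $t_{d^*} > t'_{d^*}$ or $n_{d^*} > n'_{d^*}$, and in turn supplies the token required for the chosen forward atomic step. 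Handling the residual edge case where no forward step is available at $d^*$ but a reverse step is available at $q'$ at the symmetric rightmost-disagreement index completes the case analysis.
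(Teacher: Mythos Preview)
Your forward direction is correct and is essentially a cleaner restatement of the paper's argument: the paper observes that the right-hand side of \eqref{characterize} collects terms in a general element of $\phi(\preceq_r)$ after eliminating $\mathrm{titrx}_*$, $\mathrm{det}_D$ and off-superdiagonal $\mathrm{exch}_{j,k}$ via Facts~\ref{titrxelim}--\ref{excheqs}, and your formulation via the auxiliary relation $R$ makes the same point.

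Your backward direction shares its outer structure with the paper's proof---all the $\mathrm{tol}_1$ arrows are placed at the front and all the $\mathrm{bal}_r$ arrows at the end, reducing to the case $\eta_1=\gamma_D=0$ in which totals of both toxic and tolerated outcomes are conserved.  But your treatment of this interior case has a genuine gap.  You declare that ``the operations at my disposal are exactly $\mathrm{titro}_d$ and $\mathrm{exch}_{d,d+1}$,'' and propose to peel off one such basis arrow per inductive step, supplying the precondition either at $q$ (forward) or at $q'$ (reverse).  This fails already for $D=2$ with
\[
q=\bigl(\tfrac{1}{1},\tfrac{0}{0}\bigr),\qquad q'=\bigl(\tfrac{0}{0},\tfrac{1}{1}\bigr),\qquad (\eta,\gamma)=\bigl((0,1),(1,0)\bigr)\in\N^4.
\]
Here $\eta_1=\gamma_2=0$, so you are in your third case.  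But $\mathrm{titro}_2$ requires $u_1\ge 1$ at $q$ (false) or $u'_2\ge 1$ at $q'$ (false), while $\mathrm{exch}_{1,2}$ requires $u_2\ge 1$ at $q$ (false) or $u'_1\ge 1$ at $q'$ (false).  No leftmost/rightmost-disagreement argument will manufacture the missing $\texttt{o}$ token; neither basis arrow is applicable in either direction.

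The paper resolves this by \emph{not} restricting to basis arrows: it explicitly uses $\mathrm{titrx}_*$ alongside $\mathrm{titro}_*$ to raise the net dose-intensity $N$ stepwise up to $N'$, and only then permutes with $\mathrm{exch}_*$ (which preserves $N$).  In the example above, the single arrow $q\preceq_{\mathrm{titrx}_1}q'$ does the job.  Your inductive framework can be repaired the same way: when $\eta_1=\gamma_D=0$ and some $N_d<N'_d$, pick the least such $d$ and observe that $n_{d-1}>n'_{d-1}\ge 0$, so either $u_{d-1}>0$ (forward-apply $\mathrm{titro}_d$, decrementing $\eta_d$) or $t_{d-1}>0$ (forward-apply $\mathrm{titrx}_{d-1}$, decrementing both $\gamma_{d-1}$ and $\eta_d$).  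Once $N=N'$ the remaining rearrangement is handled by $\mathrm{exch}_*$ alone.
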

\begin{proof}
  $(\implies)$ The RHS of \eqref{characterize} merely collects terms in the general element of $\phi(\preceq_r)$, eliminating $\mathrm{titrx}_*$ and $\mathrm{det}_D$ by Facts~\ref{titrxelim} and \ref{detelim}, then transforming the generic upper-triangular sum $\sum_{j<k}\gamma_{jk}\cdot\mathrm{exch}_{j,k}$ to a tidy superdiagonal form via Fact~\ref{excheqs}.

  $(\impliedby)$ By definition, $q\preceq_rq' \iff q = q_0 \preceq_{a_1} q_1 \preceq_{a_2} q_2\,\dots \preceq_{a_n} q_n = q'$ for some sequence of atomic arrows $(a_i)_{i=1}^n \in \{\mathrm{tol}_1,\mathrm{titro}_j,\mathrm{titrx}_j,\mathrm{exch}_{j,k},\mathrm{bal}_r\}$ and tallies $q_i\in Q^D$.  So the issue here becomes whether the terms collected in the {\em formal} sum on the RHS of \eqref{characterize} may be separated and transformed into such a sequence, with every partial sum (working left-to-right) constituting a valid tally:
  \begin{equation}\label{partials}
    [q_\ell] = [q_0] + \sum_{i=1}^\ell a_i \in Q^D,\,1\le\ell\le n.
  \end{equation}
  Now \textsc{wlog} we may safely permute the `purely additive' $\mathrm{tol}_1$ to the front of any such sum, and may delay the `purely subtractive' $\mathrm{bal}_r$ until the end.  Thus, we may deal with the narrower question whether the middle terms of \eqref{characterize} may always be spanned by a sequence like \eqref{partials}.  Since these middle terms {\em conserve} the sums $\sum t_d$ and $\sum u_d$, they implement a permutation.  Furthermore, this permutation can only preserve or increase net dose-intensity $N$.  So we need only show that our atomic-arrow repertoire suffices to construct {\em any} permutation having this property.  But this is straightforward: simply apply $\mathrm{titro}_*$ and then $\mathrm{titrx}_*$ as needed to increase $N$ stepwise up to $N'$, and then freely permute via $\mathrm{exch}_*$ to obtain $q'$.
\end{proof}

\begin{cor}\label{cancellation}
  `Cancellation': $q+b\preceq_r q'+b \implies q\preceq_r q'$.
\end{cor}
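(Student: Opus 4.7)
The plan is to deduce cancellation directly from Theorem~\ref{characterization}, exploiting the fact that the theorem characterizes $\preceq_r$ entirely in terms of the formal difference $[q'-q] \in \Delta Q^D$, an object that is manifestly insensitive to adding the same element to both arguments.

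First I would apply the forward direction of Theorem~\ref{characterization} to the hypothesis $q+b \preceq_r q'+b$, obtaining $[(q'+b)-(q+b)] = (\eta,\gamma) \in \N^{2D}$ for some nonnegative coefficient vector. Next I would observe that in the abelian group $\Delta Q^D$ we have the identity $[(q'+b)-(q+b)] = [q'-q]$: this follows at once from the definition of the equivalence relation $\triangleq$, since $(q'+b)+q = q'+(q+b)$ in $Q^D$, and is also visible from the group-homomorphism viewpoint as $([q']+[b])-([q]+[b]) = [q']-[q]$. Hence the very same $(\eta,\gamma) \in \N^{2D}$ represents $[q'-q]$, and the backward direction of Theorem~\ref{characterization} delivers $q \preceq_r q'$.

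There is no substantial obstacle here; cancellation is essentially a corollary packaged into the characterization itself. The only point that deserves a word of care is that the coordinate assignment $[\Delta q] \mapsto (\eta,\gamma) \in \Z^{2D}$ of Fact~\ref{coefs} is a $\Z$-linear bijection (it is defined by a unimodular linear recurrence), so the $(\eta,\gamma)$ attached to $[(q'+b)-(q+b)]$ and to $[q'-q]$ really do coincide componentwise and not merely up to representative. Once that is noted, the containment of $\N^{2D}$-coordinates transfers from one formal difference to the other for free.
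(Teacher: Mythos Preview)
Your argument is correct and is exactly the approach the paper has in mind: the corollary is stated without proof immediately after Theorem~\ref{characterization}, precisely because the characterization depends only on the formal difference $[q'-q]$, which equals $[(q'+b)-(q+b)]$ in $\Delta Q^D$. Your extra remark about the unimodularity of the $(\eta,\gamma)$ coordinates is a nice touch but not strictly needed, since Fact~\ref{coefs} already asserts uniqueness of the coefficient vector.
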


\begin{cor}\label{partord}
  $\preceq_r$ is in fact a \underline{partial order} on $Q^D$, since $q \cong q'$ requires both $(\gamma,\eta)$ and $(-\gamma,-\eta)$ to be non-negative, which can hold only if $\gamma = \eta = 0$, whence $t_d\equiv t_d'$, $u_d\equiv u_d'$ and thus $q=q'$.
\end{cor}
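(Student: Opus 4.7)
Since $\preceq_r$ is already known to be a preorder (Definition~\ref{lesaf^D}), the only thing to verify is antisymmetry: if $q \preceq_r q'$ and $q' \preceq_r q$, then $q = q'$. The plan is to push the problem through the embedding $\phi$ of Fact~\ref{coefs} into the free $\Z$-module $\Z^{2D}$, where antisymmetry reduces to the triviality that $v,-v \in \N^{2D}$ forces $v=0$.

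Concretely, I would first apply Theorem~\ref{characterization} to each of the two hypotheses. From $q \preceq_r q'$ we get $[q'-q] = (\eta,\gamma) \in \N^{2D}$, and from $q' \preceq_r q$ we get $[q-q'] = (\eta',\gamma') \in \N^{2D}$. Since $\phi$ lands in the Abelian group $\Delta Q^D$ and $[q-q'] = -[q'-q]$ there, the uniqueness clause of Fact~\ref{coefs} identifies $(\eta',\gamma') = (-\eta,-\gamma)$. Non-negativity of both $(\eta,\gamma)$ and its negation then forces $\eta = 0$ and $\gamma = 0$ coordinatewise.

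With $(\eta,\gamma) = 0$, the uniqueness part of Fact~\ref{coefs} gives $[q'-q] = [\langle\frac{0}{0}\rangle]$ in $\Delta Q^D$, and the fact immediately following Notation~\ref{zeroed} (that $[q'] - [q] = [\langle\frac{0}{0}\rangle] \iff q' = q$) yields $q = q'$. If one prefers a more pedestrian finish, the linear recurrences displayed in the proof of Fact~\ref{coefs} can be inverted directly to recover $\Delta t_d$ and $\Delta n_d$ from $(\eta,\gamma)$, and setting $(\eta,\gamma) = 0$ immediately gives $\Delta t = \Delta n = 0$, hence $q = q'$.

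There is essentially no obstacle here: the real work has already been done in Theorem~\ref{characterization} and in establishing the uniqueness of the $(\eta,\gamma)$-representation in Fact~\ref{coefs}. The only place one must be careful is in confirming that the map $\phi$ respects additive inverses well enough to identify $[q-q']$ with $-[q'-q]$ in $\Delta Q^D$, but this is immediate from the Abelian group structure already granted.
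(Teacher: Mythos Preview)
Your proposal is correct and follows essentially the same argument as the paper: apply Theorem~\ref{characterization} in both directions to obtain $(\eta,\gamma)\in\N^{2D}$ and $(-\eta,-\gamma)\in\N^{2D}$, conclude $(\eta,\gamma)=0$, and then recover $q=q'$ from the uniqueness in Fact~\ref{coefs}. The paper's statement of the corollary already contains this reasoning in compressed form, so there is nothing further to add.
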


\begin{cor}\label{meetjoin}
  For $q,q'\in\Q_r$, with $[q]=(\eta,\gamma), [q']=(\eta',\gamma')$, then
  $$
  (\eta,\gamma)\wedge(\eta',\gamma') = (\eta\wedge\eta',\gamma\wedge\gamma')
  \quad\text{and}\quad
  (\eta,\gamma)\vee(\eta',\gamma') = (\eta\vee\eta',\gamma\vee\gamma').
  $$
  yield the meet $q\wedge q'$ and join $q\vee q'$, respectively, \underline{provided they correspond to valid tallies}.
\end{cor}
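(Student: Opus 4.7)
The plan is to transport the question of meets and joins in $\Q_r$ into the coordinate space $\Z^{2D}$ via Theorem~\ref{characterization}, where coordinatewise min and max manifestly give the lattice operations. First I would combine the zeroing convention of Notation~\ref{zeroed} with Fact~\ref{coefs} to view the assignment $q \mapsto [q] = (\eta,\gamma)$ as an injection $Q^D \hookrightarrow \Z^{2D}$, and recall the identity $[q'-q] = [q']-[q]$. Theorem~\ref{characterization} then reads as
$$
q \preceq_r q' \iff (\eta,\gamma) \le (\eta',\gamma') \text{ coordinatewise in } \Z^{2D},
$$
so that $\Q_r$ embeds as a subposet of $(\Z^{2D},\le)$.

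Next, supposing the coordinate pair $(\eta\wedge\eta',\gamma\wedge\gamma')$ pulls back to a valid tally $q^\wedge \in Q^D$, I would verify the meet property by a direct appeal to coordinatewise comparison. The inequalities $(\eta\wedge\eta',\gamma\wedge\gamma') \le (\eta,\gamma)$ and $(\eta\wedge\eta',\gamma\wedge\gamma') \le (\eta',\gamma')$ give $q^\wedge \preceq_r q$ and $q^\wedge \preceq_r q'$ by the characterization above, so $q^\wedge$ is a lower bound. For universality, any $p \in \Q_r$ with $[p] = (\mu,\nu)$ satisfying $p \preceq_r q$ and $p \preceq_r q'$ obeys $\mu \le \eta, \eta'$ and $\nu \le \gamma, \gamma'$ coordinatewise, hence $(\mu,\nu) \le (\eta\wedge\eta', \gamma\wedge\gamma')$, whence $p \preceq_r q^\wedge$. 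The dual argument, exchanging $\wedge$ for $\vee$ and reversing inequalities, handles the join.

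The genuine subtlety here — and the reason for the italicized caveat in the statement — is that the embedding $Q^D \hookrightarrow \Z^{2D}$ is \emph{not} surjective: arbitrary integer vectors $(\eta,\gamma)$ need not correspond to tallies with non-negative counts $t_d, u_d \in \N$. Consequently the ambient lattice structure on $\Z^{2D}$ descends to $\Q_r$ only when the candidate coordinates $(\eta\wedge\eta',\gamma\wedge\gamma')$ (respectively $(\eta\vee\eta',\gamma\vee\gamma')$) happen to lie in the image of $Q^D$. The corollary simply restricts attention to that case; a genuine obstruction to a full lattice structure on $\Q_r$ arises when they do not, and no further argument is needed to dispose of this point since the validity hypothesis is built into the claim.
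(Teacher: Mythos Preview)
Your proposal is correct and follows exactly the reasoning the paper leaves implicit: the corollary is stated without proof, as an immediate consequence of Theorem~\ref{characterization} together with the identity $[q'-q]=[q']-[q]$ from Fact~3.22, which renders $q\mapsto(\eta,\gamma)$ an order embedding of $\Q_r$ into $(\Z^{2D},\le)$. Your explicit verification of the lower-bound and universality properties (and the dual for joins) is precisely the routine check this entails, and your closing paragraph correctly identifies the reason for the proviso---the paper makes the same point immediately afterward via concrete counterexamples rather than in the abstract.
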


The need for the proviso in Corollary~\ref{meetjoin} is illustrated by the following diagram in $\Q_1$:

$$
\begin{tikzcd}[row sep=normal,column sep=small]
  (\frac{1}{2},\frac{0}{1}) & & (\frac{0}{1},\frac{1}{1}) \\
  (\frac{1}{3},\frac{0}{0}) \ar[u,"titro_2"] & (\frac{1}{1},\frac{0}{1}) \ar[ul,"tol_1"'] \ar[ur,"exch_{12}"] \\
  & (\frac{1}{2},\frac{0}{0}) \ar[ul,"tol_1"] \ar[u,"titro_2"'] \ar[uur,bend right,"titrx_1"']
\end{tikzcd}
\;\equiv\;
\begin{tikzcd}[row sep=normal,column sep=small]
  (\texttt{ox},\texttt{o}) & & (\texttt{o},\texttt{x}) \\
  (\texttt{oxo},-) \ar[u,"titro_2"] & (\texttt{x},\texttt{o}) \ar[ul,"tol_1"'] \ar[ur,"exch_{12}"] \\
  & (\texttt{ox},-) \ar[ul,"tol_1"] \ar[u,"titro_2"'] \ar[uur,bend right,"titrx_1"']
\end{tikzcd}
\xlongrightarrow{\ref{cancellation}}
\begin{tikzcd}[row sep=normal,column sep=small]
  (\texttt{x},\texttt{o}) & & (-,\texttt{x}) \\
  (\texttt{xo},-) \ar[u,"titro_2"] \\
  & (\texttt{x},-) \ar[ul,"tol_1"] \ar[uur,bend right,"titrx_1"']
\end{tikzcd},
$$
in which the `\texttt{xo}' notation allows us to see immediately that cancellation of an `\texttt{o}' at dose 1 renders the would-be meet $(\frac{1}{0},\frac{0}{1})$ calculated via Corollary~\ref{meetjoin} invalid.  The proviso is necessary for joins, as well:
$$
\begin{tikzcd}[row sep=normal,column sep=small]
  & (\frac{0}{0},\frac{1}{2}) \\
  & (\frac{0}{1},\frac{1}{1}) \ar[u,"titro_2"] \\
  (\frac{1}{1},\frac{0}{1}) \ar[ur,"exch_{12}"'] \ar[uur,bend left,"titrx_1"']
  & & (\frac{0}{0},\frac{2}{3}) \ar[ul,"bal_1"] \ar[uul,bend right,"det_2"]
\end{tikzcd}
\;\equiv\;
\begin{tikzcd}[row sep=normal,column sep=small]
  & (-,\texttt{ox}) \\
  & (\texttt{o},\texttt{x}) \ar[u,"titro_2"] \\
  (\texttt{x},\texttt{o}) \ar[ur,"exch_{12}"'] \ar[uur,bend left,"titrx_1"']
  & & (-,\texttt{xox}) \ar[ul,"bal_1"] \ar[uul,bend right,"det_2"]
\end{tikzcd}
\xlongrightarrow{\ref{cancellation}}
\begin{tikzcd}[row sep=normal,column sep=small]
  & (-,\texttt{x}) \\
  \\
  (\texttt{x},-) \ar[uur,bend left,"titrx_1"']
  & & (-,\texttt{xx}) \ar[uul,bend right,"det_2"]
\end{tikzcd},
$$

\begin{nota}
  Corollary~\ref{partord} licenses the notation $\prec$ defined by,
  $$
  q_1 \prec q_2 \iff q_1 \preceq q_2 \;\;\text{and}\;\; q_1 \neq q_2.
  $$
\end{nota}

\begin{fact}\label{tol-d}
  $\langle\frac{0}{0}\rangle \preceq_r \langle\frac{0}{1}\rangle_d \;\forall d\in 1..D$.
\end{fact}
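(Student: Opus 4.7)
My plan is to give a direct constructive proof exhibiting an explicit sequence of atomic arrows from $\langle\frac{0}{0}\rangle$ to $\langle\frac{0}{1}\rangle_d$. First I would apply $\preceq_{tol_1}$ to obtain $\langle\frac{0}{1}\rangle_1$, then successively apply $\preceq_{titro_2}, \preceq_{titro_3}, \ldots, \preceq_{titro_d}$ to titrate the tolerated \texttt{o} upward to position $d$. Since every one of these arrows already lies in $\preceq_0$, hence a fortiori in $\preceq_r$, and since $\preceq_r$ is a preorder (and so closed under composition), concatenating this chain of $d$ atomic steps witnesses the claim.

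As a sanity check I would also verify the result through the algebraic characterization of Theorem~\ref{characterization}. Writing $[\langle\frac{0}{1}\rangle_d - \langle\frac{0}{0}\rangle]$ in coordinates gives $\Delta t \equiv 0$ and $\Delta n = e_d$. Feeding this into the recurrences of Fact~\ref{coefs}, the equations $\Delta t_1 = -\gamma_1$ and $\Delta t_j = \gamma_{j-1} - \gamma_j$ force $\gamma \equiv 0$; the relations $\Delta n_j = \eta_j - \eta_{j+1}$ together with $\sum_j \Delta n_j = \eta_1$ then yield $\eta_j = 1$ for $j \le d$ and $\eta_j = 0$ for $j > d$. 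The resulting coefficient vector $(\eta,\gamma) \in \N^{2D}$ is non-negative, so Theorem~\ref{characterization} again delivers $\langle\frac{0}{0}\rangle \preceq_r \langle\frac{0}{1}\rangle_d$.

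There is no real obstacle here: the atomic vocabulary of Definition~\ref{domon} was designed precisely to mirror the pharmacologic scenario ``a fresh participant enrolls at dose 1, tolerates it, then titrates upward one dose at a time through dose $d$,'' so the claim reduces to reading off that script.
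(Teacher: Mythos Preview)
Your proposal is correct and follows exactly the paper's own proof: the chain $\langle\frac{0}{0}\rangle \preceq_{tol_1} \langle\frac{0}{1}\rangle_1 \preceq_{titro_2} \cdots \preceq_{titro_d} \langle\frac{0}{1}\rangle_d$. Your additional verification via Theorem~\ref{characterization} is a sound (and correctly computed) sanity check, though the paper omits it as unnecessary.
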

\begin{proof}
  $$
  \langle\frac{0}{0}\rangle \preceq_{tol_1} \langle\frac{0}{1}\rangle_1 \preceq_{titr_2} ...  \preceq_{titr_d} \langle\frac{0}{1}\rangle_d.
  $$
\end{proof}

\begin{fact}\label{d11}
  $\langle\frac{1}{2}\rangle_d \preceq_r \langle\frac{0}{0}\rangle \;\forall d\in 1..D$.
\end{fact}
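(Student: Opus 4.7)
My plan is to reduce the general case $d$ to the case $d=D$, where we can directly apply the $\preceq_{bal}$ arrow (which is part of $\preceq_1$ and hence of every $\preceq_r$). The key observation is that $\langle\frac{1}{2}\rangle_d$ decomposes monoidally as $\langle\frac{1}{1}\rangle_d + \langle\frac{0}{1}\rangle_d$, so it suffices to move the toxic half up the dose ladder using $\preceq_{titrx_j}$ and the tolerated half up using $\preceq_{titro_j}$, landing at $\langle\frac{1}{1}\rangle_D + \langle\frac{0}{1}\rangle_D = \langle\frac{1}{2}\rangle_D$.

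More explicitly, I will construct the chain
$$
\langle\tfrac{1}{2}\rangle_d \;=\; \langle\tfrac{1}{1}\rangle_d + \langle\tfrac{0}{1}\rangle_d \;\preceq_{titrx_d}\; \langle\tfrac{1}{1}\rangle_{d+1} + \langle\tfrac{0}{1}\rangle_d \;\preceq\; \cdots \;\preceq\; \langle\tfrac{1}{1}\rangle_D + \langle\tfrac{0}{1}\rangle_d,
$$
applying successive $titrx$ arrows (by monoidality with the stationary $\langle\frac{0}{1}\rangle_d$ summand), followed by
$$
\langle\tfrac{1}{1}\rangle_D + \langle\tfrac{0}{1}\rangle_d \;\preceq_{titro_{d+1}}\; \langle\tfrac{1}{1}\rangle_D + \langle\tfrac{0}{1}\rangle_{d+1} \;\preceq\; \cdots \;\preceq\; \langle\tfrac{1}{1}\rangle_D + \langle\tfrac{0}{1}\rangle_D \;=\; \langle\tfrac{1}{2}\rangle_D,
$$
and finally $\langle\tfrac{1}{2}\rangle_D \preceq_{bal} \langle\tfrac{0}{0}\rangle$. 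Taking the transitive closure yields $\langle\tfrac{1}{2}\rangle_d \preceq_r \langle\tfrac{0}{0}\rangle$. The boundary cases $d=D$ (skip both chains of arrows) and $d<D$ are handled uniformly, with empty sub-chains simply collapsing.

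I do not expect any real obstacle here: every arrow used lies in $\preceq_0 \cup \{\preceq_{bal}\} \subseteq \preceq_1 \subseteq \preceq_r$, so the result is immediate from the monoidal structure and the definition of $\preceq_r$. The only conceptual point worth flagging is that although $\preceq_{bal_r}$ is the `parametric' balance arrow defining $\preceq_r$, we do not need it for this Fact; the older $\preceq_{bal} = \preceq_{bal_1}$ — inherited through $\preceq_1$ — already suffices, which is in keeping with Fact~\ref{rimplies}. Alternatively, one could sidestep the chain argument entirely by invoking Theorem~\ref{characterization}: computing $[\langle\frac{0}{0}\rangle - \langle\frac{1}{2}\rangle_d]$ from the recurrences in Fact~\ref{coefs} gives $\gamma = (0,\ldots,0,1,\ldots,1)$ (ones from position $d$ onward) and $\eta = (r-1, r+1, \ldots, r+1)$, both componentwise nonnegative, whence the inequality follows at once. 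I would present the elementary chain proof since it is shorter and illuminates the `shift up, then balance out' intuition behind the result.
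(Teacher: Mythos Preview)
Your chain argument is correct. Both you and the paper reduce to the top-dose case and then apply the balance arrow, but the mechanism of the reduction differs: the paper proceeds by induction on $d$, with the single step
\[
\langle\tfrac{1}{2}\rangle_d \;\preceq_{titro_{d+1}}\; \langle\tfrac{1}{1},\tfrac{0}{1}\rangle_{d,d+1} \;\preceq_{exch_{d,d+1}}\; \langle\tfrac{0}{1},\tfrac{1}{1}\rangle_{d,d+1} \;\preceq_{titro_{d+1}}\; \langle\tfrac{1}{2}\rangle_{d+1},
\]
thereby using the $\preceq_{exch}$ arrows that are specific to the therapeutic preorder $\preceq_1$. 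Your route instead splits $\langle\tfrac{1}{2}\rangle_d$ monoidally and shifts each summand separately with $\preceq_{titrx}$ and $\preceq_{titro}$, which are already present in the dose-monotone $\preceq_0$. This buys you a marginally sharper statement---the fact holds in any monoidal preorder containing $\preceq_0$ together with $\preceq_{bal}$, without needing $\preceq_{exch}$---and makes transparent that (in the language of Fact~\ref{titrxelim}) the paper's $\mathrm{titro}+\mathrm{exch}$ move is just your $\mathrm{titrx}$ in disguise. One small correction to your alternative argument via Theorem~\ref{characterization}: for general $d$ the vector $\eta$ has $d$ initial entries equal to $r-1$ (not just one), followed by $D-d$ entries equal to $r+1$; the nonnegativity conclusion is unaffected since $r\ge 1$.
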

\begin{proof}
  From $\langle\frac{1}{2}\rangle_D \preceq_{bal_r} \langle\frac{0}{0}\rangle$, we proceed by induction on $d<D$:
  $$
  \langle\frac{1}{2}\rangle_d \preceq_{titr_d} \langle\frac{1}{1},\frac{0}{1}\rangle_{d,d+1} \preceq_{exch_{d,d+1}} \langle\frac{0}{1},\frac{1}{1}\rangle_{d,d+1} \preceq_{titr_d} \langle\frac{1}{2}\rangle_{d+1}.
  $$
\end{proof}

\begin{fact}\label{tox-d}
  $\langle\frac{1}{1}\rangle_d \preceq_r \langle\frac{0}{0}\rangle \;\forall d\in 1..D$.
\end{fact}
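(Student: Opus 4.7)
The plan is to obtain $\langle\frac{1}{1}\rangle_d \preceq_r \langle\frac{0}{0}\rangle$ by sandwiching $\langle\frac{1}{1}\rangle_d$ between two tallies whose comparability to $\langle\frac{0}{0}\rangle$ is already known, then invoking transitivity. The sandwiching uses the fact that $\langle\frac{1}{2}\rangle_d$ sits above $\langle\frac{1}{1}\rangle_d$ in $\preceq_r$ while sitting below $\langle\frac{0}{0}\rangle$, which is exactly what Fact~\ref{d11} gives.

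First I would show $\langle\frac{1}{1}\rangle_d \preceq_r \langle\frac{1}{2}\rangle_d$. For this, I would apply Fact~\ref{tol-d}, which supplies $\langle\frac{0}{0}\rangle \preceq_r \langle\frac{0}{1}\rangle_d$, and then use monoidality by adding $\langle\frac{1}{1}\rangle_d$ to both sides. Since $\langle\frac{1}{1}\rangle_d + \langle\frac{0}{0}\rangle = \langle\frac{1}{1}\rangle_d$ (the unit) and $\langle\frac{1}{1}\rangle_d + \langle\frac{0}{1}\rangle_d = \langle\frac{1}{2}\rangle_d$ (by the definition of $+$ on $Q$, combining $\frac{1}{1}$ and $\frac{0}{1}$ at position $d$ yields $\frac{1}{2}$), this step is immediate.

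Second, I would invoke Fact~\ref{d11} to obtain $\langle\frac{1}{2}\rangle_d \preceq_r \langle\frac{0}{0}\rangle$, and conclude $\langle\frac{1}{1}\rangle_d \preceq_r \langle\frac{1}{2}\rangle_d \preceq_r \langle\frac{0}{0}\rangle$ by transitivity of the preorder.

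There is no real obstacle here, since both prerequisite comparabilities are already in hand; the only thing to watch is that the intermediate tally $\langle\frac{1}{2}\rangle_d$ is a genuine element of $Q^D$ (which it is, since $1 \le 2$), so no validity proviso as in Corollary~\ref{meetjoin} intervenes. As a sanity check, one could alternatively appeal to Theorem~\ref{characterization} and compute the coefficients $(\eta,\gamma)$ of $[\langle\frac{0}{0}\rangle - \langle\frac{1}{1}\rangle_d]$ directly, finding $\gamma_j = \mathbf{1}_{j \ge d}$ and $\eta_j = r + \mathbf{1}_{j \ge d}$, all nonnegative for $r \ge 1$; but the transitive chain above is more transparent and mirrors the style of the preceding Facts~\ref{tol-d} and \ref{d11}.
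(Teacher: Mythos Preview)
Your proof is correct and follows exactly the same route as the paper: add $\langle\frac{1}{1}\rangle_d$ to both sides of Fact~\ref{tol-d} via monoidality to obtain $\langle\frac{1}{1}\rangle_d \preceq_r \langle\frac{1}{2}\rangle_d$, then apply Fact~\ref{d11} and transitivity. (In your optional sanity check the $\eta$-coefficients should read $\eta_j = r + \mathbf{1}_{j>d}$ rather than $\mathbf{1}_{j\ge d}$, but nonnegativity---and hence the conclusion---is unaffected.)
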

\begin{proof}
  $$
  \langle\frac{1}{1}\rangle_d \preceq_\text{Fact~\ref{tol-d}} \langle\frac{1}{2}\rangle_d \preceq_\text{Fact~\ref{d11}} \langle\frac{0}{0}\rangle.
  $$
\end{proof}

Facts~\ref{tol-d} and \ref{tox-d} reassure us that Definition~\ref{lesaf^D} suffices to obtain intuitively necessary evident-safety relations, such that each new observation of tolerability at any dose yields a safer tally, and each new observation of a toxicity yields a less-safe tally.  Note also how Facts~\ref{tol-d} and \ref{d11} have the similar effect of showing that the `edge-case' arrows $\preceq_{tol_1}$ and $\preceq_{bal_r}$ apply not just at $d=1$ and $d=D$, respectively, but indeed `homogeneously' across all doses.

\subsection{Application to the $\mbox{3+3}$ protocol}

Let us use $\preceq_r$ to examine the implicit pharmacology of the $\mbox{3+3}$ trial.  The smallest nontrivial $\mbox{3+3}$ design considers $D = 2$ doses, and has 46 possible paths through 42 accessible tallies, each with a dose-level recommendation in $\{0, 1, 2\}$ defined by the protocol \cite{norris-executable-2024}.  The Hasse diagram in Figure~\ref{fig-hasse2}(a) depicts the transitive reduction of the partial order $\preceq_1$ on these accessible tallies.  Coloring the tallies according to their dose recommendations allows us to see that these recommendations are mostly dose-monotone.  But careful examination reveals 2 exceptions, including one in which the dose recommendations for {\em final} tallies $(\frac{1}{6},\frac{1}{6}) \preceq (\frac{0}{6},\frac{2}{6})$ conflict with their evident safety.  Apparently, it is specifically the $\preceq_{exch}$ principle that \mbox{3+3} violates:
$$
(\frac{1}{6},\frac{1}{6}) = (\frac{0}{5},\frac{1}{5}) + (\frac{1}{1},\frac{0}{1}) \preceq_{exch_{12}} (\frac{0}{1},\frac{1}{1}) + (\frac{0}{5},\frac{1}{5}) = (\frac{0}{6},\frac{2}{6}).
$$
This non-monotonicity turns out in fact to be a general flaw in the \mbox{3+3} design for all $D>1$, which remarkably appears to have escaped notice even amid decades of severe criticism of this design by statisticians.  In Section~\ref{sec-dep}, we deal with the `rectification' of this flaw via Equation \eqref{rectification}.

The motivation for enlarging $\preceq_1$ generally to $\preceq_r$ may be seen in Figure~\ref{fig-hasse2}(b), depicting these same 42 accessible tallies partially ordered by $\preceq_2$.  Clearly, a much simpler transitive reduction is accomplished here, and this occurs without introducing any further non-monotonicities beyond the two already noted.  Thus, we may suppose that $\preceq_2$ more fully embraces whatever pharmacologic intuition is manifested in the \mbox{3+3} design.

\begin{figure}
  \includegraphics[width=6.5in]{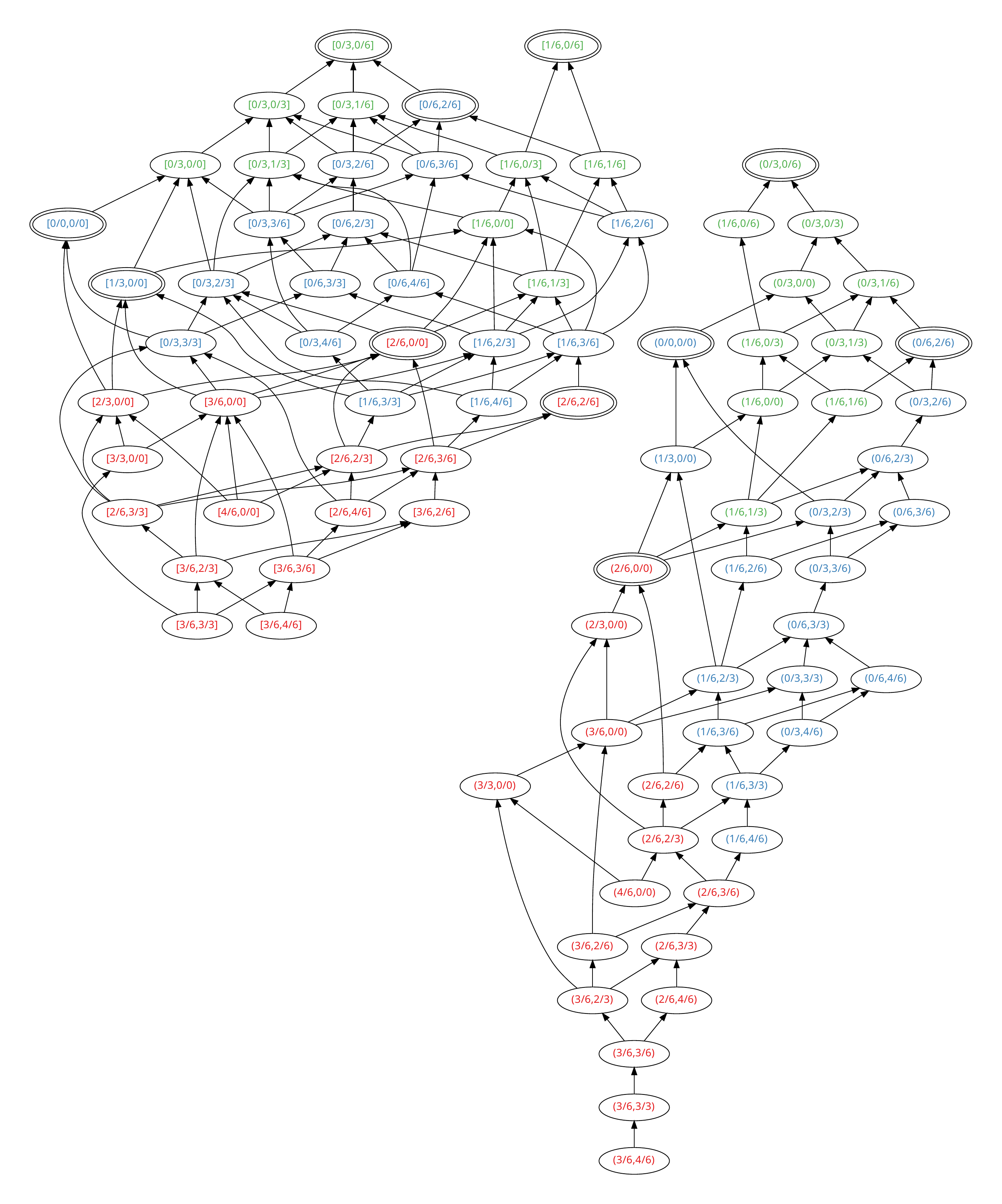}
  \caption{Hasse diagrams for the 42 tallies accessible to the 2-dose \mbox{3+3} protocol, partially ordered according to (a) $\preceq_1$ [top left] or (b) $\preceq_2$ [right], and colored according to the dose recommendation: 0=red, 1=blue, 2=green.  Doubled borders indicate the maximal elements of each colored subset.}\label{fig-hasse2}
\end{figure}

\section{Dose-Escalation Protocols}\label{sec-dep}

In this section, we omit the $r$ subscript from $\Q$ and $\preceq$, supposing $r \in \N$ arbitrarily fixed.

\begin{nota}
  Let $\D$ denote the category freely generated by the graph, $0 \rightarrow 1 \rightarrow \dots \rightarrow D$.
\end{nota}

\begin{defn}
  An \textbf{incremental enrollment [IE]} is a functor $\Q \xrightarrow{\;\;E\;\;} \D$.  Here functoriality imposes the core intuition of dose-escalation,
  $$
  q \preceq q' \implies E q \le E q',
  $$
  that dose assignment should correlate with evident safety.
\end{defn}

The color-coding of Figure~\ref{fig-hasse2} exhibits a {\em partial} function $Q^2 \xrightharpoonup{F} \{0,1,2\}$ defined by the $\mbox{3+3}$ design, mapping the subset $|\A| \subset Q^2$ of 42 tallies {\em accessible within the protocol} to their respective dose recommendations.  But the design yields no explicit dose assignment for tallies that are inaccessible to its rigid protocol, such as tallies with any denominator not a multiple of 3.  So, to release the operational constraint imposed by the \mbox{3+3} protocol's 3-at-a-time enrollment, we might like to pose and solve the extension problem,
\begin{equation}\label{extension-problem}
  \begin{tikzcd}[row sep=large]
    \A \ar[dr,hook,"\iota"] \ar[rr,"F"] & & \D \\
    & \Q\, \ar[ur,dashed,"E?"']
  \end{tikzcd}
\end{equation}
In order to regard \eqref{extension-problem} as a diagram in \textbf{Poset}, let us suppose---\textsc{wlog}, as we shall see---that $F$ has been rendered monotone by a `rectification' transformation such as,
\begin{equation}\label{rectification}
\bar{F}(a') = \bigwedge \{ F(a) \mid a' \preceq a \in \A \}.
\end{equation}

Now \eqref{extension-problem} looks like a typical set-up for seeking a Kan extension of $F$ along the inclusion functor $\iota$:
\begin{equation}
  \begin{tikzcd}[row sep=large]
    \A \ar[dr,hook,"\iota"'] \ar[rr,"F"{name=F},""{name=U,below}] & & \D \\
    & \Q\, \ar[ur,dashed,"\text{Lan}_\iota F"']
      \ar[Rightarrow,shorten=2.5mm,from=U,"\eta"]
  \end{tikzcd}
  \quad\quad\text{or}\quad\quad
  \begin{tikzcd}[row sep=large]
    \A \ar[dr,hook,"\iota"'] \ar[rr,"F"{name=F},""{name=U,below}] & & \D \\
    & \Q\, \ar[ur,dashed,"\text{Ran}_\iota F"']
      \ar[Rightarrow,shorten=2.5mm,to=U,"\epsilon"']
  \end{tikzcd}
\end{equation}
Of these, it is evidently the {\em right} Kan extension that approximates $F$ from the side of {\em safety}, since under $\epsilon:\text{Ran}_\iota F\cdot\iota \Rightarrow F$ we are assured of $\text{Ran}_\iota F(a)\le F(a)\,\forall a\in\A$.  Observe indeed that the explicit formula obtained from \cite{riehl-category-2016} Theorem 6.2.1 simply extends \eqref{rectification} to all of $\Q\supset\A$:
\begin{align}\label{right-Kan}
  \text{Ran}_\iota F(q) &= \lim(q\downarrow\iota\xrightarrow{\Pi_q}\A\xrightarrow{F}\D) \quad\quad \text{cf. \cite{riehl-category-2016} Eq. (6.2.3)} \notag \\
  &= \bigwedge\{F(a)\mid q\preceq a\in\A\}.
\end{align}
For $d < D$ we can express \eqref{right-Kan} equivalently by,
\begin{align}\label{almost-adjunction}
  \text{Ran}_\iota F(q) \le d &\iff \exists\,a\in\A \text{ such that } q\preceq a \text{ and } F(a)\le d \notag \\
  &\iff q\in\;\downarrow F^{-1}(\downarrow d),
\end{align}
whence
$$
\begin{aligned}
  \text{Ran}_\iota F(q) = d &\iff q\in\;\downarrow F^{-1}(\downarrow d) \smallsetminus \downarrow F^{-1}(\downarrow(d-1)) \\
  &\iff q\in\;\downarrow \bigcup_{j\le d} F^{-1}(j) \smallsetminus \downarrow \bigcup_{j<d} F^{-1}(j) \\
  &\iff q\in\;\bigcup_{j\le d}\downarrow F^{-1}(j) \smallsetminus \bigcup_{j<d}\downarrow F^{-1}(j) \\
  &\iff q\in\;\downarrow F^{-1}(d) \smallsetminus \bigcup_{j<d}\downarrow F^{-1}(j) \\
  &\iff q\in\;\downarrow\text{Max}(F^{-1}(d)) \smallsetminus \bigcup_{j<d}\downarrow\text{Max}(F^{-1}(j)),
\end{aligned}
$$
showing how $\text{Ran}_\iota F$ may be computed from the maximal elements of the fibers $F^{-1}(d)$:
\begin{equation}
\text{Ran}_\iota F(q) = \begin{cases}\label{Kan-cascade}
  0 &:\: q \,\in\, \downarrow\text{Max}(F^{-1}(0)) \\
  1 &:\: q \,\in\, \downarrow\text{Max}(F^{-1}(1)) \,\smallsetminus \downarrow\text{Max}(F^{-1}(0)) \\
   ... \\
  D-1 &:\: q \,\in\, \downarrow\text{Max}(F^{-1}(D-1)) \,\smallsetminus \bigcup_{j<D-1} \downarrow\text{Max}(F^{-1}(j)) \\
  D &:\: q \,\in\, \Q \,\smallsetminus \bigcup_{j<D}\downarrow\text{Max}(F^{-1}(j)).
\end{cases}
\end{equation}
Observe that the coloring in Figure~\ref{fig-hasse2} depicts the fibers of $F$ over $\D$, and that each fiber has just a few maximal elements.

If in \eqref{almost-adjunction} we were to replace the sets $F^{-1}(\downarrow d)$ by {\em single tallies} $G(d) = \bigvee F^{-1}(\downarrow d)$, we would obtain an IE that is strictly safer than $\text{Ran}_\iota F$, and left-adjoint to $G:\D\rightarrow\Q$.  This motivates the following
\begin{defn}
  A \textbf{lower Galois enrollment}\footnote{An adjunction between preorders is called a Galois connection, hence the name.} is an IE $\IE$ for which a right (upper) adjoint exists:
  \begin{center}
    \begin{tikzcd}
      \Q \ar[rrr,"E"{name=E,description}]
      & & & \D
      \ar[lll,bend right,"G"'{name=G}]
      \arrow[phantom,from=G,to=E,"\dashv" rotate=90]
    \end{tikzcd},
  \end{center}
providing the dose-assignment rule,
$$
E(q) \le d \quad \iff \quad q \preceq G(d).
$$
\end{defn}
One appeal of a Galois enrollment is that it yields a simple rule parametrized by $D$ tallies.  Writing $G(d) = g_d$, we have parameters $\{g_0\preceq ...\preceq g_{D-1}\} \subset \Q$ defining a lower-Galois enrollment by a cascading partition of $\Q$:
\begin{equation}
E(q) = \begin{cases}\label{cascade}
  0 &:\: q \,\in\, \downarrow\!g_0 \\
  1 &:\: q \,\in\, \downarrow\!g_1 \,\smallsetminus \downarrow\!g_0 \\
   ... \\
  D-1 &:\: q \,\in\, \downarrow\!g_{D-1} \,\smallsetminus \downarrow\!g_{D-2} \\
  D &:\: q \,\in\, \Q \,\smallsetminus \downarrow\!g_{D-1}.
\end{cases}
\end{equation}

While the Kan extension \eqref{Kan-cascade} does nicely motivate the lower-Galois enrollment, one may instead pass directly from the dose recommendations $\A\!\xrightarrow{F}\D$ of some given trial to a lower-Galois approximation.  Wishing to proceed {\em cautiously} in approximating $F$, we must ensure $E(q) \le F(q) \;\forall q \in \A$.  For a lower-Galois approximation $E\dashv (g_d)$, this cautionary requirement imposes a {\em lower bound} on its upper adjoint:
$$
F(q) \le d \; \Longrightarrow \; q \preceq g_d \quad \forall q \in \A,\; d \in \D. \label{gconstraint}
$$
A {\em closest} approximation will be had with minimal such $g_d$'s, easily obtained as the joins,
\begin{equation}\label{g-via-join}
  g_d = \bigvee F^{-1}(d),
\end{equation}
which again (like $\text{Ran}_\iota F$) render a preliminary `rectification' step \eqref{rectification} superfluous.  In our application below, the joins \eqref{g-via-join} are all readily obtainable via Corollary~\ref{meetjoin}.

\section{Trial Simulation}\label{sec-sim}

In this section, we present preliminary simulation results demonstrating the
feasibility of extending discrete-time dose-escalation designs to incorporate
discretionary titration either via Equation~\eqref{Kan-cascade} or via Equations \eqref{cascade}--\eqref{g-via-join}.
The Prolog code implementing these simulations is available in the public
repository \url{https://codeberg.org/dcnorris/DEDUCTION}.  This code requires
several numerical special functions and related probability distributions
implemented in a fork of Scryer Prolog available from
\url{https://github.com/dcnorris/scryer-prolog/tree/special}, features intended
for eventual inclusion in Scryer Prolog.  Appendix~\ref{rolling-rules} details
the definite clause grammar (DCG) \cite{iso-prolog-dcg-2025} that describes the operation of a simulated
trial in the continuous-time setting outlined in Section~\ref{ops}.

Working in $\Q_2 = (Q^D,\preceq_2)$, we obtain for the $D=3$ case of the \mbox{3+3} design the fiber maximal elements via
\begin{lstlisting}
?- d_fiberscolumn(3, FMColumn).
   FMColumn = [0-[[2/6,0/0,0/0]],
               1-[[0/0,0/0,0/0],[0/6,2/6,0/0]],
               2-[[0/3,0/0,0/0],[0/3,0/6,2/6]]].
\end{lstlisting}
and the right adjoint $G$ via
\begin{lstlisting}
?- d_joinscascade(3, Gs).
   Gs = [[0/3,0/6,0/0],[0/6,0/0,0/0],[2/6,0/0,0/0]].
\end{lstlisting}

We posit a simulation scenario in which the toxicity threshold (maximum tolerated dose, MTD) is distributed lognormally in the population with median $\mu$ and a biologically modest standard deviation,
$$
\ln \mathrm{MTD} \sim \mathrm{Normal}(\ln \mu, \ln 1.5).
$$
We suppose that our three doses $(x_1, x_2, x_3)$ are prespecified in geometric sequence with ratio 1.4,\footnote{This matches the 40\% dose-step increments of \cite{simon-accelerated-1997}.} and with the highest dose $x_3 = \mu$ happening to coincide with median MTD.  For simulation purposes, it is most convenient to draw pseudorandom MTDs on the same logarithmic scale with the dose levels $d \in \{1,2,3\}$ themselves:
$$
\mathrm{MTD} \sim \mathrm{Normal}(3, \frac{\ln 1.5}{\ln 1.4}).
$$
The probabilities of toxicity at each of the 3 dose levels are then
\begin{lstlisting}
> pnorm(1:3, mean=3.0, sd=log(1.5)/log(1.4)) # R code
[1] 0.04848889 0.20331388 0.50000000
\end{lstlisting}

Simulating Poisson arrivals at rate 2.5 per toxicity-assessment period, and enrolling 40 participants, 1000 independent realizations of our right Kan and lower-Galois extended trial designs yield final dose recommendations with probabilities tabulated below.  These are contrasted with the corresponding probabilities for the standard \mbox{3+3} design, calculated via \cite[Eqs (3--4)]{norris-what-2020} in Appendix~\ref{exactly-33}.
\begin{center}
\begin{tabular}{rcccc}
  Final dose recommendation & 0 & 1 & 2 & 3 \\
  \hline
  right Kan extension & 0.430 & 0.457 & 0.091 & 0.022 \\
  lower-Galois extension & 0.460 & 0.432 & 0.084 & 0.024 \\
  standard 3+3 design & 0.027 & 0.336 & 0.562 & 0.075
\end{tabular}
\end{center}
Being by construction strictly safer than the \mbox{3+3} design, our right Kan and lower-Galois approximations of course yield more cautious recommendations.

\section{Future Work}

Comprehensive sets of such simulation experiments could help orient oncology clinical trialists to various frequentist characteristics of this new design, such as its {\em target toxicity probability}, a commonly discussed design parameter for dose-escalation trials.  But to exhibit the genuinely new characteristics of these dose-titration designs---such as their benefits for individual trial participants, or the fuller picture they yield of the population distribution of MTD---will require developing dynamic, interactive data visualizations \cite{norris-patient-centered-2020}.

While the long dominance and universal familiarity of the \mbox{3+3} design have made it an obligatory first target for our right Kan and lower-Galois approximations, several classes of newer parametric and semiparametric designs \cite{clertant-semiparametric-2017}, including CRM \cite{oquigley-continual-1990} and BOIN \cite{yan-boin-2020}, may present more interesting targets.  Alternatively, if dose-titration designs could gain acceptance on their own merits---that is, apart from their relations to more familiar dose-escalation designs---this would open up interesting possibilities for computationally challenging searches over discrete spaces of lower-Galois enrollment functors, to identify {\em de novo} designs with specified safety properties.

\clearpage
\bibliographystyle{unsrturl}
\bibliography{escatology}

\clearpage
\appendix

\section{DCG Simulation of Rolling Enrollment with Titration}\label{rolling-rules}

Definite clause grammar (DCG) \texttt{rolling//4} describes a list of events
occurring in a rolling-enrollment trial.  Its rules have 4 arguments:
\begin{enumerate}
\item[\texttt{E\_2}] a binary predicate defining the dose-recommendation rule
\item[\texttt{Q}] a cumulative tally from assessments completed up to now
\item[\texttt{Ws}] a queue of patients waiting to enroll
\item[\texttt{As}] a \texttt{keysort/2}-ed list of \texttt{Time-A} pairs for future arrivals/assessments \texttt{A} of the form:
  \begin{itemize}
  \item \texttt{arr(MTD)}, arrival of patient with toxicity threshold \texttt{MTD} $\in \R^+$ on the dose-level scale
  \item \texttt{ao(Rx,MTD)} or \texttt{ax(Rx,MTD)}, tolerated and non-tolerated {\em enrolling} doses respectively
  \item \texttt{to(Rx,MTD)} or \texttt{tx(Rx,MTD)}, denoting likewise assessments at subsequent {\em titrated} doses.
  \end{itemize}
\end{enumerate}
We scale time so that the toxicity assessment period is 1.  This allows us
(among other conveniences) to model the time-to-toxicity in case MTD < Rx simply
as Delay = MTD/Rx.

Provided that the waiting queue is empty, a patient arriving when the current recommended dose is nonzero will be enrolled at that dose.  But a patient arriving at a time when current enrolling dose is 0 enters the waiting queue.
\begin{lstlisting}
rolling(E_2, Q, [], [Z-arr(MTD)|As]) --> { rec(E_2, Q, As, Rx), Rx > 0 },
                                         dose(enroll(Rx,MTD@Z), As, As1),
                                         rolling(E_2, Q, [], As1).
rolling(E_2, Q, Ws, [Z-arr(MTD)|As]) --> { rec(E_2, Q, As, 0) },
                                         enqueue(MTD@Z, Ws, Ws1),
                                         rolling(E_2, Q, Ws1, As).

rec(E_2, Q, As, Rx) :- tally_pending_pesstally(Q, As, Qp), call(E_2, Qp, Rx).

dose(Event, As, As1) --> { Event =.. [_, Rx, MTD@Z],
                           (   Rx >  MTD, A = ax(Rx,MTD), Za is Z + MTD/Rx
                           ;   Rx =< MTD, A = ao(Rx,MTD), Za is Z + 1.0
                           ),
                           sched(As, Za-A, As1) },
                         [Event].

sched(As, Za-A, As1) :- keysort([Za-A|As], As1).

enqueue(MTD@Z, Ws, Ws1) --> { append(Ws, [MTD], Ws1) }, [enqueue(MTD)@Z].
\end{lstlisting}
But whenever the current dose recommendation becomes nonzero, waiting participants receive their doses in order of arrival:
\begin{lstlisting}
rolling(E_2, Q, [MTD|Ws], [now(Z)|As]) --> { rec(E_2, Q, As, Rx), Rx > 0 },
                                           dose(dequeue(Rx,MTD@Z), As, As1),
                                           rolling(E_2, Q, Ws, [now(Z)|As1]).
\end{lstlisting}
Enrollment out of the waiting queue continues until the queue is empty, or the current recommended dose drops to 0:
\begin{lstlisting}
rolling(E_2, Q,     [], [now(_)|As]) --> rolling(E_2, Q, [], As).
rolling(E_2, Q, [W|Ws], [now(_)|As]) --> { rec(E_2, Q, As, 0) },
                                         rolling(E_2, Q, [W|Ws], As).
\end{lstlisting}
Tallying a tolerated dose, whether an enrolling dose \texttt{ao} or titrated
dose \texttt{to}, injects a {\em future} titration into \texttt{As}, unless
already at maximum dose.  Because tallying a tolerated dose may increase the
current enrolling dose---and in particular, increase it from 0 to a positive
dose level---we transiently substitute a term of the form \texttt{now(Time)} in
place of a just-tallied non-toxicity in \texttt{As}, to effect a `freeze-frame'
during which one or more enrollments may occur out of \texttt{Ws}.
(Cf. footnote 1 in the main text.)
\begin{lstlisting}
rolling(E_2, Q, Ws, [Z-O|As]) --> { O =.. [_o, Dose, MTD], member(_o, [ao,to]) },
                                  tallyo(_o, Q, Dose, Q1, MTD@Z),
                                  { length(Q, D) },
                                  d_updose(D, Dose, MTD@Z, As, As1),
                                  rolling(E_2, Q1, Ws, [now(Z)|As1]).

tallyo(ao, Q, Dose, Q1, MTD@Z) --> { tallyo(Q, Dose, Q1) }, [o(Dose,MTD)@Z].
tallyo(to, Q, Dose, Q1, MTD@Z) --> {  titro(Q, Dose, Q1) }, [o(Dose,MTD)@Z].

d_updose(D, D, _, As, As) --> [].
d_updose(D, Dose, MTD@Z, As, As1) --> { #Dose #< #D,
                                         #Rx #= #Dose + 1,
                                         titrwait(Wait),
                                         (   Rx >  MTD, A = tx(Rx,MTD),
                                             Z1 is Z + Wait + MTD/Rx
                                         ;   Rx =< MTD, A = to(Rx,MTD),
                                             Z1 is Z + Wait + 1
                                         ),
                                         sched(As, Z1-A, As1) },
                                       [updose(Rx,MTD)@Z1].
\end{lstlisting}
By specifying a delay before upward titration, we can effect a {\em gradual} introduction of titration.  (Setting this arbitrarily high would effectively eliminate titration from the protocol.)
\begin{lstlisting}
titrwait(1).
\end{lstlisting}
Tallying toxicities does not change the current dose recommendation, and so is
more straightforward.  (We need not explicitly model the dose reduction which
would ensue upon assessment of toxicity.)
\begin{lstlisting}
rolling(E_2, Q, Ws, [Z-X|As]) --> { X =.. [_x, Dose, MTD], member(_x, [ax,tx]) },
                                  tallyx(Q, Dose, Q1, MTD@Z),
                                  rolling(E_2, Q1, Ws, As).

tallyx(Q, Dose, Q1, MTD@Z) --> { tallyx(Q, Dose, Q1) }, [x(Dose,MTD)@Z].
\end{lstlisting}
Finally, when \texttt{As=[]}, no further arrivals or assessments are pending,
and the trial concludes emitting the final tally \texttt{Q} and its associated
dose recommendation \texttt{Rx}:
\begin{lstlisting}
rolling(E_2, Q,    [], []) --> { call(E_2, Q, Rx) }, [Q, next(Rx)].
rolling(E_2, Q, [_|_], []) --> { call(E_2, Q, 0)  }, [Q, next(0)].
\end{lstlisting}

Predicates \texttt{tallyo/3}, \texttt{titro/3} and \texttt{tallyx/3} are defined quite straightforwardly using declarative integer arithmetic in \url{https://codeberg.org/dcnorris/DEDUCTION/src/branch/main/tally.pl}.  The definition of \texttt{tally\_pending\_pesstally/3} may be found in \url{https://codeberg.org/dcnorris/DEDUCTION/src/branch/main/queueing.pl}.

\newpage
\section{Recommendation Probabilities for the \mbox{3+3} Design}\label{exactly-33}

Precise probabilities for outcomes of the standard \mbox{3+3} trial design on the simulation scenario of Section~\ref{sec-sim} may be obtained as follows, using R package {\bf precautionary} available from \url{https://github.com/dcnorris/precautionary} and documented at \url{https://dcnorris.github.io/precautionary/}.

\lstset{
  language=R
}

\begin{lstlisting}
library(precautionary) # install via remotes::install_github("dcnorris/precautionary")

finrec33 <- function(Tcd) {
  t <- apply(Tcd, 2, sum)
  for (d in ncol(Tcd):1)
    if (!is.na(t[d]) && t[d] < 2)
      return(d)
  return(0)
}

p <- pnorm(1:3, mean=3.0, sd=log(1.5)/log(1.4))
q <- 1 - p
pq <- c(p,q)

b <- precautionary:::b[[3]]
U <- precautionary:::U[[3]]
log_pi <- b + U %*% log(pq)

rx <- apply(precautionary:::T[[3]], 3, finrec33)

doses <- 0:3
names(doses) <- paste0("DL", 0:3)

rec_probs <- t(outer(doses, rx, "==") %*% exp(log_pi))  

> rec_probs
            DL0       DL1       DL2        DL3
[1,] 0.02710926 0.3361197 0.5619761 0.07479493
\end{lstlisting}

\end{document}